\documentclass[12pt]{amsart}
\usepackage{a4wide,graphicx}
\usepackage{color}
\usepackage{caption}
\usepackage[labelformat=simple,labelfont={}]{subfig}  

\let\pa\partial  
\let\na\nabla  
\let\eps\varepsilon  
\newcommand{\N}{{\mathbb N}}  
\newcommand{\R}{{\mathbb R}} 
\newcommand{\Z}{{\mathbb Z}} 
\newcommand{\diver}{\operatorname{div}}  
\newcommand{\dx}{\mathrm{d}x}
\newcommand{\T}{{{\mathbb T}^d}} 
\newcommand{\K}{{\mathbb T}}
\newcommand{\dd}{{\mathrm{d}}}

\newtheorem{theorem}{Theorem}   
\newtheorem{lemma}[theorem]{Lemma}   
   
\newtheorem{remark}[theorem]{Remark}   
\newtheorem{corollary}[theorem]{Corollary}

 
\begin{document}  

\title[Approximations of a quantum diffusion equation]{Entropy-stable 
and entropy-dissipative approximations of a fourth-order quantum diffusion equation}

\author{Mario Bukal}
\address{Institute for Analysis and Scientific Computing, Vienna University of  
	Technology, Wiedner Hauptstra\ss e 8--10, 1040 Wien, Austria}
\email{mbukal@asc.tuwien.ac.at} 

\author{Etienne Emmrich}
\address{Institute for Mathematics, Technical University of Berlin,
  Stra{\ss}e des 17. Juni 136, 10623 Berlin, Germany}
\email{emmrich@math.tu-berlin.de}

\author{Ansgar J\"ungel}
\address{Institute for Analysis and Scientific Computing, Vienna University of  
	Technology, Wiedner Hauptstra\ss e 8--10, 1040 Wien, Austria}
\email{juengel@tuwien.ac.at} 

\date{\today}

\thanks{The first and last author acknowledge partial support from   
the Austrian Science Fund (FWF), grants P20214, P22108, and I395, and    
the Austrian-French Project of the Austrian Exchange Service (\"OAD)} 

\begin{abstract}
Structure-preserving numerical schemes for a nonlinear parabolic fourth-order
equation, modeling the electron transport in quantum semiconductors, with
periodic boundary conditions are analyzed. First, a two-step  
backward differentiation formula (BDF) semi-discretization
in time is investigated. The scheme preserves the nonnegativity of the solution,
is entropy stable and dissipates a modified
entropy functional. The existence of a weak semi-discrete solution and, in
a particular case, its temporal second-order convergence to the continuous
solution is proved. The proofs employ an algebraic relation which implies the G-stability of the two-step BDF.
Second, an implicit Euler and $q$-step BDF discrete
variational derivative method are considered. This scheme, which exploits the
variational structure of the equation, dissipates 
the discrete Fisher information (or energy). Numerical experiments show that
the discrete (relative) entropies and Fisher information decay even exponentially 
fast to zero.
\end{abstract}

\keywords{Derrida-Lebowitz-Speer-Spohn equation, discrete entropy-dissipation
i\-ne\-qua\-li\-ty, Fisher information, BDF time discretization, numerical convergence, 
discrete variational derivative method.}  
 
\subjclass[2000]{65M06, 65M12, 65M15, 35Q40, 82D37.}  

\maketitle


\section{Introduction}

This paper is devoted to the study of novel structure-preserving 
temporal higher-order numerical schemes for the fourth-order quantum diffusion equation
\begin{equation}\label{qde}
  n_t + \diver\left(n\na\left(\frac{\Delta\sqrt{n}}{\sqrt{n}}\right)\right) = 0,
  \quad x\in\T,\ t>0, \quad n(0)=n_0,
\end{equation}
where $\T$ is the $d$-dimensional torus.
This equation is the zero-temperature and zero-field limit of the quantum
drift-diffusion model, which describes the evolution of the electron density 
$n(t)=n(t,\cdot)$ in a quantum semiconductor device; see \cite{Jue09}. 
It was derived in \cite{DMR05} from a relaxation-time Wigner equation 
using a Chapman-Enskog expansion around the quantum equilibrium.
For smooth positive solutions, \eqref{qde} can be written in a symmetric 
form for the variable $\log n$:
\begin{equation}\label{dlss}
  n_t + \frac12\pa_{ij}^2(n\pa_{ij}^2\log n) = 0,\quad x\in\T,\ t>0,\quad
  n(0)=n_0,
\end{equation}
where here and in the following, we employ the summation convention over 
repeated indices and the notation $\pa_i=\pa/\pa x_i$, 
$\pa_{ij}^2=\pa^2/\pa x_i\pa x_j$. This is the multidimensional form of the so-called
Derrida-Lebowitz-Speer-Spohn (DLSS) equation.
Its one-dimensional version was derived in \cite{DLSS91} in a suitable
scaling limit from the time-discrete Toom model and the variable $n$ is related
to a limit random variable.

The main difficulties in the analysis of \eqref{qde} (or \eqref{dlss}) 
are the highly nonlinear
structure, originating from the quantum potential term $\Delta\sqrt{n}/\sqrt{n}$
in \eqref{qde}, and the fourth-order differential operator, which lacks
a maximum principle.

These difficulties have been overcome by exploiting the rich 
mathematical structure of \eqref{dlss}.
First, equation \eqref{dlss} preserves the nonnegativity
of the solutions \cite{JuMa08}: Starting from a nonnegative initial datum, the
weak solution stays nonnegative for all time. Second, \eqref{dlss} allows for a class of Lyapunov 
functionals and so-called entropy dissipation estimates. 
More precisely, the functionals
$$
  E_\alpha[n] = \frac{1}{\alpha(\alpha-1)}\int_\T n^\alpha \dx \quad
  (\alpha\neq 0,1),\quad E_1[n] = \int_\T\big(n(\log n-1)+1\big)\dx
$$
are Lyapunov functionals along solutions to \eqref{dlss}, i.e.\ 
$\dd E_\alpha[n]/\dd t\le 0$
if $(\sqrt{d}-1)^2/(d+2)\le\alpha\le (\sqrt{d}+1)^2/(d+2)$, and the entropy
dissipation inequality
\begin{equation}\label{1.est1}
  \frac{\dd}{\dd t}E_\alpha[n] + 2\kappa_\alpha
  \int_\T(\Delta n^{\alpha/2})^2\dx \le 0
\end{equation}
holds if $(\sqrt{d}-1)^2/(d+2)<\alpha < (\sqrt{d}+1)^2/(d+2)$.
The constant $\kappa_\alpha>0$ can be computed explicity, see Lemma \ref{lem.H2} below.
For $\alpha=1$, inequality \eqref{1.est1}
can be interpreted as the dissipation of the physical entropy.
Third, equation \eqref{qde} is the gradient flow of the Fisher information
\begin{equation}\label{1.F}
  F[n] = \int_\T|\na\sqrt{n}|^2\dx
\end{equation}
with respect to the Wasserstein metric \cite{GST09}. As the variational
derivative of the Fisher information equals 
$\delta F[n]/\delta n=-\Delta\sqrt{n}/\sqrt{n}$, a straightforward computation
shows that the Fisher information is dissipated along solutions to \eqref{qde},
\begin{equation}\label{1.est2}
  \frac{\dd}{\dd t}F[n] + \int_\T n\Big|\na\Big(\frac{\delta F[n]}{\delta n}
  \Big)\Big|^2\dx = 0.
\end{equation}
Since the Fisher information can be interpreted as the quantum energy,
the latter can be seen as an energy dissipation identity.

Whereas the local-in-time existence of positive classical solutions
for strictly positive $W^{1,p}(\T)$ initial data with $p>d$ could be proved
using semigroup theory \cite{BLS94}, global-in-time existence results
were based on estimates \eqref{1.est1} and \eqref{1.est2}.
More precisely, the global existence of a nonnegative
weak solution was achieved in \cite{JuPi00} in the one-dimensional case. This
result was extended later to several space dimensions in
\cite{JuMa08}, employing entropy dissipation inequalites, and in \cite{GST09},
exploring the variational structure of the equation.

From a numerical viewpoint, it is desirable to design numerical
approximations which preserve the above structural properties like positivity
preservation, entropy stability, and entropy or energy dissipation on a discrete level. For a constant time step size $\tau >0$, let $t_k = k\tau$ $(k\ge 0)$.
If $n_k$ approximates the solution $n(t_k)$ to \eqref{dlss} at time $t_k$,
we call a numerical scheme {\em entropy dissipating}
if $E_\alpha[n_{k+1}]\le E_\alpha[n_k]$ for all $k\ge 0$ with $\alpha$ in a certain
parameter range, and {\em entropy stable} if there exists a constant
$C>0$ such that $E_\alpha[n_k]\le C$ for all $k\ge 0$. In this paper, we 
investigate the entropy stability and entropy 
dissipation of backward differentiation formulas (BDF).

In the literature, most of the numerical schemes proposed for \eqref{dlss}
are based on an implicit Euler discretization in one space dimension. 
In \cite{JuPi01}, the convergence of a positivity-preserving semi-discrete 
Euler scheme was shown.
A fully discrete finite-difference scheme which preserves the positivity, 
mass, and physical entropy was derived in \cite{CJT03}. 
D\"uring et al.\ \cite{DMM10} employed the variational structure of \eqref{dlss}
on a fully discrete level and introduced a discrete minimizing movement scheme.
This approach implies the decay of the discrete Fisher information and the
nonnegativity of the discrete solutions. 
Finally, a positivity-preserving finite-volume scheme in several space dimensions 
for a stationary quantum drift-diffusion model was suggested in \cite{CGJ11}.

Positivity preserving and entropy consistent numerical schemes have been
investigated in the literature also for other nonlinear fourth- and second-order
equations. For instance, a positivity preserving finite difference approximation
of the thin-film equation was proposed by Zhornitskaya and Bertozzi \cite{ZhBe00}.
Finite element techniques for the same equation were employed by Barrett,
Blowley, and Garcke \cite{BBG98}, imposing the nonnegativity property as a constraint such that at each time level
a variational inequality has to be solved.
Furthermore, entropy consistent finite volume--finite element schemes were
suggested and analyzed by Gr\"un and Rumpf \cite{Gru03,GrRu00}.
Furihata and Matsuo \cite{FuMa10} developed the
discrete variational derivative method to derive conservative or dissipative
schemes for a variety of evolution equations possessing a variational structure.
Entropy dissipative fully discrete schemes for electro-reaction-diffusion
systems were derived by Glitzky and G\"artner \cite{GlGa09}.

In most of these works, the time discretization is restricted to the implicit
Euler method, motivated by the fact that the solutions often lack regularity. 
However, high-order schemes often still yield smaller time errors than the Euler
scheme, and this improved accuracy is vital to match the spatial approximation
errors. A difficulty of the analysis is that the time discretization has to be
compatible with the entropy structure of the equation. This is the case
for the first-order implicit Euler discretization. Indeed, multiplying the semi-discrete scheme
\begin{equation}\label{1.euler}
  \frac{1}{\tau}(n_{k+1}-n_k) + \frac12\pa_{ij}^2(n_{k+1}\pa_{ij}^2\log n_{k+1})
  = 0, \quad k\ge 0,
\end{equation}
where $\tau>0$ is the time step and $n_k$ approximates $n(t_k)$ with $t_k=\tau k$,
by $\log n_{k+1}$ and using the elementary inequality 
\begin{equation}\label{euler.ineq}
  (x-y)\log x\ge x\log x-y\log y\quad \mbox{for }x,y>0
\end{equation} 
(which follows from the convexity of $x\mapsto x\log x$), 
it was shown in \cite[Lemma 4.1]{JuMa08} that
$$
  E_\alpha[n_{k+1}] + 2\tau\kappa_\alpha\int_\T (\Delta n_{k+1}^{\alpha/2})^2\dx
  \le E_\alpha[n_k], \quad k\ge 0.
$$
As a consequence, $k\mapsto E_\alpha[n_k]$ is nonincreasing and the entropy
dissipation structure is preserved. It is less clear whether higher-order approximations
yield entropy dissipating numerical schemes. In this paper, we prove this
property for the two-step BDF method.

Two-step BDF (or BDF2) methods have been employed in the literature to
approximate various evolution equations in different contexts. 
We just mention numerical schemes for
incompressible Navier-Stokes problems \cite{Emm04,GiRa81,HiSu00},
semilinear and quasilinear parabolic equations \cite{Emm05,Moo94}, and
nonlinear evolution problems governed by monotone operators \cite{Emm09,Kre78}.
To our knowledge, temporal higher-order schemes for the quantum diffusion 
equation \eqref{qde} have been not considered so far.

In the following, we detail our main results. 
First, we analyze the BDF2 time approximation of the DLSS equation, 
written in the form
\begin{equation}\label{alpha_dlss}
  \frac{2}{\alpha}n^{1-\alpha/2}(n^{\alpha/2})_t 
  + \frac12\pa_{ij}^2(n\pa_{ij}^2\log n) = 0,
\end{equation}which was already used in \cite{JuVi07} in a different context. 
Introducing the variable $v_k:=n_k^{\alpha/2}$, which approximates $n(t_k)^{\alpha/2}$, 
the semi-discrete BDF2 scheme for \eqref{alpha_dlss} reads as
\begin{equation}\label{disc.dlss}
  \frac{2}{\alpha\tau}v_{k+1}^{2/\alpha-1}\left(\frac32 v_{k+1} - 2v_k 
  + \frac12 v_{k-1}\right) + \frac{1}{2}\pa_{ij}^2(n_{k+1}\pa_{ij}^2
  \log n_{k+1}) = 0 \quad\mbox{in }\T,\ k\geq 1.
\end{equation}
Here, $v_0=n_0^{\alpha/2}$ is given by the initial datum $n_0$, and $v_1$
is the solution to the implicit Euler scheme
\begin{equation}\label{alpha_euler}
  \frac{2}{\alpha\tau}v_1^{2/\alpha-1}\big(v_{1} - v_0\big) 
  + \frac12\pa_{ij}^2(n_{1}\pa_{ij}^2\log n_{1}) = 0
  \quad\mbox{in }\T.
\end{equation}
The existence of a weak solution to the scheme \eqref{disc.dlss}--\eqref{alpha_euler} 
is provided by the following theorem.

\begin{theorem}[Existence of solutions and entropy stability]\label{thm.bdf2.ex}
Let $1\le d\le 3$, $1 \leq \alpha < (\sqrt{d}+1)^2/(d+2)$, and let $n_0 \in L^3(\T)$ 
be a nonnegative function.
Then there exists a weak solution $v_1 = n_1^{\alpha/2}$ of the implicit Euler scheme
\eqref{alpha_euler} and a sequence $(v_k)=(n_k^{\alpha/2})$ 
of weak nonnegative solutions to \eqref{disc.dlss} satisfying
$v_k \geq 0$ in $\T$, $v_k\in H^2(\T)$,
and for all $\phi\in W^{2,\infty}(\T)$,
\begin{align}\label{weak_alpha}
  \frac{1}{\alpha\tau}\int_{\T} & v_{k+1}^{2/\alpha-1}\left(\frac32 v_{k+1} 
  - 2v_k + \frac12 v_{k-1}\right)\phi\dx \\ 
  &{}+ \int_{\T}\left(\frac{1}{2\alpha}v_{k+1}^{2/\alpha-1}\pa_{ij}^2v_{k+1} 
  - \frac{\alpha}{2}\pa_i (v_{k+1}^{1/\alpha})\pa_j
  (v_{k+1}^{1/\alpha})\right)\pa_{ij}^2\phi\dx = 0. \nonumber
\end{align}
If $\alpha > 1$, the scheme \eqref{disc.dlss} is entropy stable and
the a priori estimate 
\begin{equation}\label{ent.stab}
  E_\alpha[n_m] + \frac43\kappa_\alpha\tau\sum_{k=1}^{m}\int_{\T}
  \big(\Delta(n_{k}^{\alpha/2})\big)^2\dx  
  \le E_\alpha[n_0],\quad m\geq 1,
\end{equation}
holds, where $\kappa_\alpha>0$ is defined in Lemma \ref{lem.H2}.
\end{theorem}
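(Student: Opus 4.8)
The plan is to pass to the variable $v=n^{\alpha/2}$, in which the entropy is quadratic, $E_\alpha[n]=\frac1{\alpha(\alpha-1)}\|v\|_{L^2(\T)}^2$ for $\alpha>1$, so that \eqref{ent.stab} is an $L^2$-type a priori bound for the iterates, and to exploit that \eqref{weak_alpha} is precisely \eqref{disc.dlss} tested against $\phi$ with two integrations by parts in the fourth-order term, since $\frac1{2\alpha}v^{2/\alpha-1}\pa_{ij}^2 v-\frac\alpha2\pa_i(v^{1/\alpha})\pa_j(v^{1/\alpha})=\frac14\,n\,\pa_{ij}^2\log n$. Hence the formal test function $\phi=\frac1{\alpha-1}n_{k+1}^{\alpha-1}=\frac1{\alpha-1}v_{k+1}^{\,2-2/\alpha}$, which moreover collapses the weight because $v_{k+1}^{2/\alpha-1}v_{k+1}^{\,2-2/\alpha}=v_{k+1}$, should reproduce the discrete analogue of \eqref{1.est1}. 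The two tasks are to make this rigorous and to produce the iterates.

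\emph{Existence.} It suffices to solve one step of \eqref{disc.dlss} for $v_{k+1}\ge0$ given $v_{k-1},v_k\in H^2(\T)$ nonnegative; the Euler scheme \eqref{alpha_euler} defining $v_1$ is the analogous, simpler case (replace $\tfrac32 v_{k+1}-2v_k+\tfrac12 v_{k-1}$ by $v_1-v_0$). I would follow the now standard regularization approach for DLSS-type equations: switch to the logarithmic unknown $y$ with $n_{k+1}=e^y$ (so positivity is automatic and $\log n_{k+1}=y$ enters linearly in the fourth-order operator), add an elliptic regularization $\eps(\Delta^2 y+y)$ (and truncate the exponential nonlinearities if needed), and solve the resulting elliptic problem by a Leray--Schauder fixed-point argument -- the $\eps\Delta^2$-term supplies ellipticity and, via elliptic regularity, enough smoothness of $y_\eps$ for $e^{(\alpha-1)y_\eps}$ to be an admissible test function, while the a priori bound comes from testing with $y_\eps$ (the regularization then contributing $\eps(\|\Delta y_\eps\|^2+\|y_\eps\|^2)$ with the right sign) together with Lemma \ref{lem.H2}. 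Finally one lets $\eps\to0$: the entropy estimate gives a bound for $v_{k+1}^{(\eps)}=e^{\alpha y_\eps/2}$ in $H^2(\T)$ uniform in $\eps$, the compact embedding $H^2(\T)\hookrightarrow\hookrightarrow C(\T)$ (valid for $1\le d\le3$) and weak $H^2$-convergence let one pass to the limit in the nonlinear terms $v^{2/\alpha-1}$, $\na(v^{1/\alpha})$, nonnegativity of $v_{k+1}$ follows from $n_{k+1}^{(\eps)}=e^{y_\eps}>0$ and a.e.\ convergence, and the regularization terms drop out because $\eps\|y_\eps\|_{H^2}^2$ stays bounded.

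\emph{Entropy stability ($\alpha>1$).} Test \eqref{weak_alpha} with $\phi=\frac1{\alpha-1}n_{k+1}^{\alpha-1}$ -- rigorously, still at the regularized level, or after truncating $n_{k+1}$ by $n_{k+1}+\delta$ and letting $\delta\to0$, since this $\phi$ need not lie in $W^{2,\infty}(\T)$. Undoing the integrations by parts, the fourth-order term becomes $\frac1{4(\alpha-1)}\int_\T n_{k+1}\,\pa_{ij}^2\log n_{k+1}\,\pa_{ij}^2(n_{k+1}^{\alpha-1})\dx$, which by Lemma \ref{lem.H2} is bounded below by a positive multiple of $\kappa_\alpha\|\Delta(n_{k+1}^{\alpha/2})\|_{L^2(\T)}^2$. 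The time term becomes $\frac1{\alpha(\alpha-1)\tau}\int_\T v_{k+1}\big(\tfrac32 v_{k+1}-2v_k+\tfrac12 v_{k-1}\big)\dx$, and here I invoke the algebraic relation implying the G-stability of BDF2 -- the pointwise identity $a(\tfrac32 a-2b+\tfrac12 c)=\mathcal G(a,b)-\mathcal G(b,c)+\tfrac14(a-2b+c)^2$ with $\mathcal G(p,q)=\tfrac14(p^2+(2p-q)^2)$, together with an inequality exploiting $v_k\ge0$ that compares $\mathcal G$ with the entropy -- to get a telescoping difference of a modified entropy $\widehat E_k$ comparable to $E_\alpha[n_k]$, plus a nonnegative remainder. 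Summing over $k=1,\dots,m-1$, adjoining the entropy--dissipation estimate for the Euler step \eqref{alpha_euler} (where the convexity inequality \eqref{euler.ineq} in the form $v_1(v_1-v_0)\ge\tfrac12(v_1^2-v_0^2)$ bounds $\widehat E_1$ by $E_\alpha[n_0]$ up to dissipation), and comparing $\widehat E_m$ with $E_\alpha[n_m]$, the telescoped chain collapses to \eqref{ent.stab}; the constant $\tfrac43$ in front of the dissipation is the outcome of tracking these constants and reflects the BDF2 coefficients.

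\emph{Main obstacle.} As always for the DLSS equation, the difficulty is the clash between the entropy-producing test functions ($n_{k+1}^{\alpha-1}$, $\log n_{k+1}$) and the regularity available: this is what forces the whole regularization scaffolding and the delicate, order-sensitive double limit in the existence proof, where the singular powers $v^{2/\alpha-1}$ and $v^{1/\alpha-1}\na v$ must be controlled, and on the a priori side it is the identification of the correct \emph{nonlinear} version of the BDF2 G-stability relation -- one compatible with $E_\alpha$ and with the Euler start -- that gives the sharp estimate \eqref{ent.stab} rather than mere boundedness. The other ingredients (fixed points, compactness, the algebra of the fourth-order term once Lemma \ref{lem.H2} is available) are routine.
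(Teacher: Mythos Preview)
Your approach is essentially the paper's: exponential change of variable $n_{k+1}=e^y$, elliptic regularization in $y$, Leray--Schauder fixed point, entropy testing to obtain an $H^2$ bound on $v_\eps=e^{\alpha y_\eps/2}$ uniform in $\eps$, compactness for the limit, and a BDF2 algebraic inequality for stability. Two places where the paper's implementation differs from your sketch are worth flagging.

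First, the regularizing operator is $\eps L(y)=\eps\big(\Delta^2 y-\diver(|\nabla y|^2\nabla y)+y\big)$, not just $\eps(\Delta^2 y+y)$. The extra $p$-Laplace term matters precisely when you test the regularized equation against the entropy test function $e^{(\alpha-1)y_\eps}/(\alpha-1)$: with it, $\langle L(y_\eps),e^{(\alpha-1)y_\eps}\rangle$ rewrites as a sum of manifestly nonnegative integrals plus $\int y_\eps e^{(\alpha-1)y_\eps}\dx$, which is bounded below, so the $\eps$-term contributes only $-C\eps$ to the entropy inequality. With $\Delta^2 y+y$ alone this sign is not obvious (the cross term $\int e^{(\alpha-1)y}\Delta y\,|\nabla y|^2$ has no sign), so your simpler regularization would need adjustment.

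Second, for entropy stability the paper does \emph{not} use the G-stability identity you invoke (their \eqref{ineq2}); that identity yields the dissipation of the \emph{modified} entropy $E^G_\alpha$ (their Corollary~\ref{coro.bdf2}), and comparing $E^G_\alpha$ back to $E_\alpha$ does not produce \eqref{ent.stab} with the stated constant. Instead they use the companion inequality $2(\tfrac32 a-2b+\tfrac12 c)a\ge\tfrac32 a^2-2b^2+\tfrac12 c^2+(a-b)^2-(b-c)^2$ (their \eqref{ineq1}), sum it from $k=1$ to $m-1$ together with the Euler estimate for $k=0$, and obtain $\tfrac32 a_m+\text{(dissipation)}\le\tfrac12(a_{m-1}+a_1+a_0)$ with $a_k=\|v_k\|_{L^2}^2$. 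A one-line induction ($a_m\le\tfrac13 a_{m-1}+\tfrac23 a_0$ and $a_1\le a_0$) then gives $a_m\le a_0$ for all $m$, and substituting back produces the factor $\tfrac43$ in front of the dissipation.
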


When we redefine the entropy, we are able to prove entropy dissipation
of the semi-discrete scheme. For this, introduce the modified entropy
$$
  E^G_\alpha[n_k,n_{k-1}] = \frac{1}{2\alpha(\alpha-1)}\int_\T
  \big(n_k^\alpha + (2n_k^{\alpha/2}-n_{k-1}^{\alpha/2})^2\big)\dx, \quad k\ge 1.
$$
This definition is motivated by the inequality 
$$
  2\left(\frac32 a-2b+\frac12 c\right)a \ge \frac12\big(a^2+(2a-b)^2\big)
  - \frac12\big(b^2+(2b-c)^2\big) \quad\mbox{for all }a,b,c\in\R,
$$
which implies the G-stability of the BDF2 method; see \cite{Dah78} and 
Lemma \ref{lem.ineq}.
The entropies $E_\alpha$ and $E_\alpha^G$ are formally related by 
$E^G_\alpha[n_k,n_{k-1}] = E_\alpha[n_k] + O(\tau)$ as $\tau\to 0$ for $k\ge 2$.

\begin{corollary}[Entropy dissipation]\label{coro.bdf2}
Let the assumptions of Theorem \ref{thm.bdf2.ex} hold for $\alpha>1$.
Then the scheme \eqref{disc.dlss} is entropy dissipative in the sense of
\begin{equation}\label{ent.diss}
  E^G_\alpha[n_{k+1},n_k] + 2\kappa_\alpha\tau\int_{\T}
  \big(\Delta(n_{k+1}^{\alpha/2})\big)^2\dx  
  \le E^G_\alpha[n_k,n_{k-1}],\quad k\geq 1.
\end{equation}
In particular, $k\mapsto E^G_\alpha[n_k,n_{k-1}]$ is nonincreasing.
\end{corollary}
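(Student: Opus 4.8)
The plan is to recycle the computation behind the entropy-stability bound \eqref{ent.stab} of Theorem \ref{thm.bdf2.ex} and, at the final algebraic step, invoke the G-stability inequality stated just before Corollary \ref{coro.bdf2} (Lemma \ref{lem.ineq}) in place of the inequality used there. Writing $v_j=n_j^{\alpha/2}$, I would first test the weak formulation \eqref{weak_alpha} with the entropy test function $\phi=\frac{1}{\alpha-1}n_{k+1}^{\alpha-1}$ (chosen so that the factor $v_{k+1}^{2/\alpha-1}\phi$ simplifies), exactly as in the proof of Theorem \ref{thm.bdf2.ex}. Since $v_{k+1}^{2/\alpha-1}n_{k+1}^{\alpha-1}=v_{k+1}$, the discrete-time term reduces to $\frac{1}{\alpha(\alpha-1)\tau}\int_\T v_{k+1}(\frac32 v_{k+1}-2v_k+\frac12 v_{k-1})\dx$, and the fourth-order term, after two integrations by parts and the entropy-production estimate of Lemma \ref{lem.H2}, is bounded below by $\kappa_\alpha\int_\T(\Delta v_{k+1})^2\dx$; hence, for every $k\ge1$,
\[
  \frac{1}{\alpha(\alpha-1)\tau}\int_\T v_{k+1}\Big(\frac32 v_{k+1}-2v_k+\frac12 v_{k-1}\Big)\dx
  + \kappa_\alpha\int_\T(\Delta v_{k+1})^2\dx \le 0 .
\]
The test function $\frac{1}{\alpha-1}n_{k+1}^{\alpha-1}$ is not $W^{2,\infty}$ where $n_{k+1}$ vanishes, but it becomes admissible after a regularization argument, exactly as in the proof of Theorem \ref{thm.bdf2.ex}, so this part imports no new difficulty.

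The remaining step is purely algebraic. I would apply Lemma \ref{lem.ineq} pointwise with $a=v_{k+1}$, $b=v_k$, $c=v_{k-1}$ and integrate over $\T$, which gives
\[
  \int_\T v_{k+1}\Big(\frac32 v_{k+1}-2v_k+\frac12 v_{k-1}\Big)\dx
  \ge \frac14\int_\T\big(v_{k+1}^2+(2v_{k+1}-v_k)^2\big)\dx
  - \frac14\int_\T\big(v_k^2+(2v_k-v_{k-1})^2\big)\dx .
\]
Since $n_j^\alpha=v_j^2$, the right-hand side equals $\frac{\alpha(\alpha-1)}{2}\big(E^G_\alpha[n_{k+1},n_k]-E^G_\alpha[n_k,n_{k-1}]\big)$. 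Substituting this into the displayed per-step estimate (the prefactor $\frac{1}{\alpha(\alpha-1)}$ is positive because $\alpha>1$) and multiplying by $2\tau$ yields exactly \eqref{ent.diss}. The monotonicity of $k\mapsto E^G_\alpha[n_k,n_{k-1}]$ then follows immediately from $\kappa_\alpha>0$ and $\int_\T(\Delta v_{k+1})^2\dx\ge0$.

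I do not expect a genuine obstacle here. The analytic substance --- existence of the iterates, admissibility of the entropy-type test function via the regularization, and the sharp constant $\kappa_\alpha$ supplied by Lemma \ref{lem.H2} --- is already available from Theorem \ref{thm.bdf2.ex} and Lemma \ref{lem.H2}, and the only genuinely new input is the elementary inequality of Lemma \ref{lem.ineq}. The one point that needs care is the bookkeeping of constants, so that the G-stability inequality reproduces $E^G_\alpha$ with precisely the normalization $\frac{1}{2\alpha(\alpha-1)}$ of its definition; tracking those factors is what pins down the prefactor $\frac{1}{\alpha-1}$ in the test function and the coefficient $2\tau$ (rather than $\tau$) in front of the dissipation term in \eqref{ent.diss}.
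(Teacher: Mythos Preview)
Your proposal is correct and follows essentially the same approach as the paper: the paper obtains \eqref{ent.diss} by repeating the computation that led to \eqref{5.k} in the proof of Theorem~\ref{thm.bdf2.ex} (testing the regularized equation with $e^{(\alpha-1)y_\eps}/(\alpha-1)$, applying Lemma~\ref{lem.H2}, and passing to the limit $\eps\to0$), but invoking inequality \eqref{ineq2} of Lemma~\ref{lem.ineq} in place of \eqref{ineq1}. Your tracking of the constants is accurate, and your remark that the test function must be handled at the regularized level is exactly the point the paper relies on.
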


We stress the fact that the implicit Euler scheme \eqref{1.euler} dissipates
{\em all} admissible entropies, whereas the BDF2 scheme just dissipates {\em one}
entropy, $E^G_\alpha[n_k]$, where $\alpha$ has been fixed in the scheme. 

The proof of Theorem \ref{thm.bdf2.ex} is based on the semi-discrete 
entropy stability inequality \eqref{ent.stab}
and the Leray-Schauder fixed-point theorem. Instead of
\eqref{euler.ineq}, we employ the algebraic inequalities \eqref{ineq1} and
\eqref{ineq2} (see Section \ref{sec.bdf2}). We have not been able to obtain
similar inequalities for BDF$k$ methods with $3\le k\le 6$. The reason
might be the fact that the only G-stable BDF methods are the BDF1 (implicit Euler)
and BDF2 discretizations \cite{Dah78}. Moreover, we have not been able
to prove entropy dissipation for $\alpha=1$ since in this case, 
inequalities \eqref{ineq1} and \eqref{ineq2} cannot be used.

If $\alpha=1$, we prove that the semi-discrete solution to the BDF2 scheme 
converges to the continuous solution with second-order rate.

\begin{theorem}[Second-order convergence]\label{thm.bdf2.conv}
Let the assumptions of Theorem \ref{thm.bdf2.ex} hold, let $\alpha=1$, and
let $(v_k)$ be the sequence of solutions to \eqref{disc.dlss}-\eqref{alpha_euler} 
constructed in Theorem \ref{thm.bdf2.ex}. 
We assume that there exist values $\mu_k>0$ such that
$v_k\ge \mu_k>0$ in $\T$. Furthermore, let $n$ be a strictly positive solution
to \eqref{dlss} satisfying $\sqrt{n}\in H^3(0,T;L^2(\T))\cap W^{2,\infty}(0,T;
L^2(\T))$. Then there exists a constant $C>0$, 
depending only on the $L^2(0,T;L^2(\T))$
norm of $(\sqrt{n})_{ttt}$, the $L^\infty(0,T;L^2(\T))$ norm of $(\sqrt{n})_{tt}$,
and $T$, but not on $\tau$, such that
$$
  \|v_k - \sqrt{n(t_k,\cdot)}\|_{L^2(\T)} \le C\tau^2,
$$
where $0<\tau<1/8$ is the time step and $t_k=\tau k$, $k\ge 0$.
\end{theorem}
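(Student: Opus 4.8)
The plan is to prove second-order convergence via a standard energy estimate for the error, carefully exploiting the G-stability of BDF2 together with the entropy/Fisher-information structure that controls the nonlinear diffusion term. Write $w_k := v_k - \sqrt{n(t_k,\cdot)}$ for the error, where $\alpha=1$ so $v_k=\sqrt{n_k}$. Subtract the exact equation, evaluated at $t_{k+1}$ and rewritten in the BDF2-consistent form, from the scheme \eqref{disc.dlss}; the temporal truncation error of the BDF2 quotient $\tfrac{1}{\tau}(\tfrac32 u_{k+1}-2u_k+\tfrac12 u_{k-1})$ applied to $u=\sqrt n$ is $O(\tau^2)$ in $L^2(0,T;L^2(\T))$, controlled by $\|(\sqrt n)_{ttt}\|_{L^2(0,T;L^2(\T))}$, and there is an additional one-time $O(\tau^2)$ contribution from the implicit-Euler start-up step \eqref{alpha_euler}, controlled by $\|(\sqrt n)_{tt}\|_{L^\infty(0,T;L^2(\T))}$. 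The idea is to test the error equation with a suitable multiplier so that (i) the discrete time-derivative term produces a telescoping G-quadratic form and (ii) the leading part of the fourth-order operator yields a coercive term.

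The key structural step: for $\alpha=1$ the equation in the form \eqref{alpha_dlss} reads $2\sqrt n\,(\sqrt n)_t + \tfrac12\pa_{ij}^2(n\pa_{ij}^2\log n)=0$, and one knows from the entropy calculus behind \eqref{1.est1} (Lemma \ref{lem.H2}) that $\tfrac12\int_\T \pa_{ij}^2(n\pa_{ij}^2\log n)\log n\,\dx = \kappa_1\int_\T(\Delta\sqrt n)^2\dx + (\text{lower order, sign-definite})$. So the natural multiplier for the error is $\log n_{k+1} - \log n(t_{k+1})$, i.e. test the weak form \eqref{weak_alpha} (and its continuous analogue) against $\phi = \log n_{k+1}-\log n(t_{k+1})$. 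First I would bound the diffusion difference from below: expand $\int_\T[\pa_{ij}^2(n_{k+1}\pa_{ij}^2\log n_{k+1}) - \pa_{ij}^2(n(t_{k+1})\pa_{ij}^2\log n(t_{k+1}))](\log n_{k+1}-\log n(t_{k+1}))\,\dx$ and, using the algebraic inequalities \eqref{ineq1}--\eqref{ineq2} already invoked for entropy stability, show it dominates $c\,\|\Delta w_{k+1}\|_{L^2(\T)}^2$ up to terms absorbable by Gronwall (here the lower bounds $v_k\ge\mu_k>0$ and $n\ge$ const are used to pass between $\log n$-differences and $w_k$, e.g. $\log x-\log y = (x-y)/\xi$). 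Next, for the time-derivative term, write $\tfrac{1}{\tau}\int_\T v_{k+1}^{2/\alpha-1}(\tfrac32 v_{k+1}-2v_k+\tfrac12 v_{k-1})(\log n_{k+1}-\log n(t_{k+1}))\,\dx$ with $2/\alpha-1=1$; the point is that $v_{k+1}(\log n_{k+1}-\log n(t_{k+1})) = 2v_{k+1}(\log v_{k+1}-\log\sqrt{n(t_{k+1})})$, which is comparable, modulo the positive lower bounds, to $2v_{k+1}\,w_{k+1}/v_{k+1}$-type expressions — more precisely I would replace the log-difference multiplier, at the cost of controllable remainders, by $w_{k+1}$ itself and then invoke the G-stability inequality stated right after Corollary \ref{coro.bdf2}, namely $2(\tfrac32 a-2b+\tfrac12 c)a\ge \tfrac12(a^2+(2a-b)^2)-\tfrac12(b^2+(2b-c)^2)$ with $a=w_{k+1}$, $b=w_k$, $c=w_{k-1}$, to obtain a telescoping lower bound $\tfrac{1}{2\tau}[\|w_{k+1}\|^2+\|2w_{k+1}-w_k\|^2] - \tfrac{1}{2\tau}[\|w_k\|^2+\|2w_k-w_{k-1}\|^2]$ in the weighted $L^2$ inner product with weight $\approx\sqrt{n_{k+1}}/\sqrt{n(t_{k+1})}$.

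Collecting these, summing over $k$ from $1$ to $m-1$, and adding the $k=1$ Euler step (where the simpler inequality \eqref{euler.ineq}-style bound gives $\|w_1\|^2 \lesssim \tau^4$), I obtain a discrete Gronwall inequality of the shape
\begin{equation*}
  \tfrac12\big(\|w_m\|^2+\|2w_m-w_{m-1}\|^2\big) + c\tau\sum_{k=2}^m\|\Delta w_k\|^2
  \le C\tau^2\sum_{k} \tau\|w_k\|^2 + C T^2\tau^4 .
\end{equation*}
Applying the discrete Gronwall lemma (the smallness $\tau<1/8$ ensures the implicit factor $1-C\tau$ stays bounded below) yields $\|w_m\|_{L^2(\T)}\le C\tau^2$ with $C$ depending on $\|(\sqrt n)_{ttt}\|_{L^2(L^2)}$, $\|(\sqrt n)_{tt}\|_{L^\infty(L^2)}$, and $T$, as claimed. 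The main obstacle I anticipate is step (i): establishing the coercivity of the \emph{difference} of the two nonlinear fourth-order terms tested against the log-difference. Unlike the pure entropy estimate, here one cannot simply reuse Lemma \ref{lem.H2} for a single function; one must linearize around $n(t_{k+1})$, control the resulting cross terms using the regularity $\sqrt n\in W^{2,\infty}(0,T;L^2)$ and, crucially, the positive lower bounds $\mu_k$ (which is exactly why the hypothesis $v_k\ge\mu_k$ is imposed), and verify that all error terms not of the form $\|\Delta w_{k+1}\|^2$ or $\|w_k\|^2$ genuinely cancel or can be absorbed — this is where the algebraic inequalities \eqref{ineq1}--\eqref{ineq2} must be applied in a quantitative, perturbative way rather than just qualitatively.
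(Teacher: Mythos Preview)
Your proposal has a genuine gap in the treatment of the nonlinear spatial operator, and the paper's route is both simpler and different in a decisive way.

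The paper does \emph{not} test with $\log n_{k+1}-\log n(t_{k+1})$ and does not try to extract coercivity of the form $c\|\Delta w_{k+1}\|^2$ from the difference of the two fourth-order terms. Instead, it divides the scheme by $v_{k+1}$ (using smoothness, which follows from the positivity assumption via a bootstrapping lemma), writes the error equation for $e_k=v_k-v(t_k)$ in terms of the operator $A(v)=v^{-1}\pa_{ij}^2(v^2\pa_{ij}^2\log v)$, and tests directly with $e_{k+1}$. The crucial input is the \emph{monotonicity identity} from \cite{JuPi03}:
\[
  \int_\T\big(A(w_1)-A(w_2)\big)(w_1-w_2)\,\dx
  =\int_\T\frac{1}{w_1w_2}\Big|\diver\Big(w_1^2\na\Big(\frac{w_1-w_2}{w_1}\Big)\Big)\Big|^2\dx\ \ge\ 0,
\]
valid for positive $w_1,w_2$. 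Thus the spatial contribution is simply \emph{nonnegative}; no linearization, no cross terms, no absorption is needed. The time-derivative part is then handled exactly as you suggest, via inequality \eqref{ineq1} applied to $e_{k+1},e_k,e_{k-1}$, followed by Young's inequality on the truncation terms and a discrete Gronwall argument (this is where $\tau<1/8$ enters).

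Your plan, by contrast, rests on two unresolved steps. First, you need the \emph{difference} of the two nonlinear terms, tested against the log-difference, to dominate $c\|\Delta w_{k+1}\|^2$; Lemma~\ref{lem.H2} only treats a single function, and you correctly flag this as the main obstacle but do not indicate how to overcome it. Second, you propose to swap the test function from the log-difference (for the spatial term) to $w_{k+1}$ (for the G-stability estimate on the time term) ``at the cost of controllable remainders''; but one can only test with a single function, and the remainders from such a swap involve products of $w_{k+1}$ with $v_{k+1}^{-1}-v(t_{k+1})^{-1}$ and with the BDF2 quotient, which are not obviously of the right order. Finally, inequalities \eqref{ineq1}--\eqref{ineq2} are purely algebraic facts about the BDF2 coefficients and play no role whatsoever in controlling the spatial operator; invoking them ``in a quantitative, perturbative way'' for the fourth-order term is a misreading of their content. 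The missing idea is precisely the monotonicity of $A$ for $\alpha=1$, which replaces your entire coercivity step with a one-line sign observation.
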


It is shown in \cite[Theorem 6.2]{BLS94} that the solution $n$ to \eqref{dlss}
is smooth locally in time if the initial datum is positive and an element of
$W^{1,\infty}(\T)$. The proof of Theorem \ref{thm.bdf2.conv} is based on 
local truncation error estimates and the monotonicity of the formal operator
$A(v)=v^{1-2/\alpha}\pa_{ij}^2(v^2\pa_{ij}^2\log v)$ for $\alpha=1$ \cite{JuPi03}.
If $\alpha\neq 1$, the operator $A$ seems to be not monotone, and our proof does
not apply. Possibly, the second-order convergence for $\alpha\neq 1$ could be achieved by applying suitable nonlinear semigroup estimates.

Next, we investigate a fully discrete numerical scheme which dissipates the
Fisher information. To this end, we employ the discrete variational derivative
method of Furihata and Matsuo \cite{FuMa10}. The method
is based on the variational structure of the DLSS equation,
\begin{equation}\label{dlss.gf}
  n_t + \diver\left(n\na\frac{\delta F[n]}{\delta n}\right) = 0, \quad t>0.
\end{equation}
The dissipation of the Fisher information $F[n]$ (see \eqref{1.F})
follows from (formally) integrating by parts in
$$
  \frac{\dd}{\dd t}F[n]
  = \int_\T\frac{\delta F[n]}{\delta n}n_t \dx
  = -\int_\T n\left|\na\left(\frac{\delta F[n]}{\delta n}\right)\right|^2\dx \le
  0 $$
(see \eqref{1.est2}). The idea of the method is to derive a discrete formula
for the variational derivative $\delta F[n]/\delta n$ in such a way that the
above integration by parts formula and consequently the dissipation
property hold on a discrete level. We provide such formulas for spatial
finite difference and temporally higher-order BDF approximations.


The numerical approximation for equation (\ref{qde}), derived in
\cite{DMM10}, takes advantage of the gradient-flow structure in the sense
that the variational structure was discretized instead of equation (\ref{qde})
itself. The method is based on the minimizing movement (steepest descent) scheme
and consequently dissipates the discrete Fisher information. 
In each time step, a constrained quadratic optimization problem for the Fisher 
information needs to be solved on a finite-dimensional space. Each subproblem has 
to be solved iteratively, leading to a sequential quadratic programming method. 
In general, this structure-preserving approach, known as ``first discretize, 
then minimize'', has good stability properties and captures well other structural
features of equations, like those presented in \cite{WW10}.

The strategy of the discrete variational derivative method is the standard
``first minimize, then discretize'' approach, i.e., the discretization of
equation (\ref{qde}), as the minimality condition in the variational setting, is
performed. To some extent this is simpler than the above approach, 
since in each time step only a discrete 
nonlinear system has to be solved, and the main structural property remains
preserved. Furthermore, we derive temporally higher-order discretizations,
whereas the scheme in \cite{DMM10} is of first order only.

To simplify the notation, we consider the spatially one-dimensional case only.
The extension to the multidimensional situation is straightforward if we assume
rectangular grids.
Let $x_0,\ldots,$ $x_N$ be equidistant grid points of $\K$ with mesh size
$h>0$ and $x_0\cong x_N$. Let $U_i^k$ be an
approximation of $n(t_k,x_i)$ and set $U^k=(U^k_0,\ldots,U^k_{N-1})$,
$U_N=U_0$. Furthermore, let $\delta_k^{1,q}$ be the
$q$-step BDF operator at time $t_k$; for instance,
\begin{align}
  \delta_{k+1}^{1,q}U_i^{k+1} &= \frac{1}{\tau}(U^{k+1}_i-U^{k}_i) 
  \quad\mbox{if }q=1, \label{1.bdf1} \\
  \delta^{1,q}_{k+1}U_i^{k+1} 
  &= \frac{1}{\tau}\left(\frac32 U^{k+1}_i - 2U^k_i + \frac12 U^{k-1}_i\right)
  \quad\mbox{if }q=2. \label{1.bdf2}
\end{align}
We denote by $\delta^{\langle 1\rangle}_i$ the central
finite-difference operator at $x_i$, i.e.\ $\delta^{\langle 1\rangle}_i U^k
= (U_{i+1}^k-U_{i-1}^k)/h$.
Then, following \eqref{dlss.gf}, we propose the fully discrete scheme
\begin{equation}\label{dvdm.q}
  \delta_{k+1}^{1,q}U_i^{k+1} = \delta^{\langle 1\rangle}_i
  \left(U^{k+1}\delta^{\langle 1\rangle}_i\left(
  \frac{\delta F_d}{\delta(U^{k+1},\ldots,U^{k-q+1})}\right)\right), \quad
  k\ge q-1,
\end{equation}
where $i=0,\ldots,N-1$. The discrete variational derivative
$\delta F_d/\delta(U^{k+1},\ldots,U^{k-q+1})\in\R^N$ is defined in such a way
that a discrete chain rule holds (see \eqref{q1.dvd} and \eqref{gen.dvd}
in Section \ref{sec.dvdm} for the precise definitions), yielding the dissipation of
the discrete Fisher information $F_d[U^k]$ in the sense of the following theorem.

\begin{theorem}[Dissipation of the Fisher information]\label{thm.dvdm}
Let $N\in\N$, $U^0\in\R^N$ be some nonnegative initial datum with unit mass, 
$\sum_{i=0}^{N-1} U_i^0 h=1$,
and let $U^1,\ldots,U^{q-1}\in\R^N$ be starting values with unit mass and
$F_d[U^{q-1}]\le\cdots\le F_d[U^0]<\infty$. Then scheme \eqref{dvdm.q},
with the discrete variational derivative
$\delta F_d/\delta(U^{k+1},\ldots,U^{k-q+1})$ defined by \eqref{gen.dvd},
is consistent of order $(q,2)$ with respect to the time-space discretization.
Furthermore, $U^k$ is bounded uniformly in $k$, has unit mass, and
the discrete Fisher information is dissipated in the sense of
$$
  \delta^{1,q}_k F_d[U^k] \le 0 \quad\mbox{for all }k\ge q.
$$
Furthermore, for $q=1$ the discrete variational derivative is defined by
\eqref{q1.dvd}, scheme \eqref{dvdm.q} is consistent of order $(1,2)$ and
the discrete Fisher information is nonincreasing, $F_d[U^{k+1}]\le F_d[U^k]$ 
for all $k\ge 1$.
\end{theorem}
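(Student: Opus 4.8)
The plan is to exploit the fact that scheme \eqref{dvdm.q} is built so as to be compatible with a \emph{discrete chain rule in time}: by the very construction of the discrete variational derivative in \eqref{gen.dvd} (respectively \eqref{q1.dvd} for $q=1$), writing $W^{k}$ for the discrete variational derivative $\delta F_d/\delta(U^{k},\ldots,U^{k-q})$ appearing at level $k$, one has the identity
$$
  \delta^{1,q}_{k}F_d[U^{k}] = h\sum_{i=0}^{N-1} W^{k}_i\,\delta^{1,q}_{k}U^{k}_i ,
  \qquad k\ge q .
$$
Granting this, the dissipation statement is immediate: substitute the scheme \eqref{dvdm.q} for $\delta^{1,q}_{k}U^{k}_i$ on the right-hand side and perform discrete summation by parts for the central operator $\delta^{\langle 1\rangle}$. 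Since the grid points lie on the torus $\K$ with $x_0\cong x_N$, no boundary terms appear and one obtains
$$
  \delta^{1,q}_{k}F_d[U^{k}] = -h\sum_{i=0}^{N-1} U^{k}_i\,\big(\delta^{\langle 1\rangle}_i W^{k}\big)^2 \le 0 ,
$$
provided $U^{k}\ge 0$; this is the exact discrete counterpart of the continuous identity \eqref{1.est2}. For $q=1$ the left-hand side equals $\tau^{-1}(F_d[U^{k}]-F_d[U^{k-1}])$, which gives the monotonicity $F_d[U^{k}]\le F_d[U^{k-1}]$ at once.

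For $q\ge 2$ the displayed inequality is a BDF$q$-type inequality for the scalar sequence $a_k:=F_d[U^k]\ge 0$, and here the hypothesis $F_d[U^{q-1}]\le\cdots\le F_d[U^0]<\infty$ enters: it allows one to propagate a uniform bound on $a_k$ along the recursion (for $q=2$ the inequality reads $a_{k+1}\le\tfrac43 a_k-\tfrac13 a_{k-1}$, so the increments $a_{k+1}-a_k\le\tfrac13(a_k-a_{k-1})$ stay nonpositive and $a_k\le a_0$ for all $k$). Mass conservation is obtained by a separate summation: summing \eqref{dvdm.q} over $i=0,\ldots,N-1$, the right-hand side is a discrete total difference and telescopes to zero by periodicity, so $M^k:=h\sum_i U^k_i$ satisfies $\delta^{1,q}_{k}M^{k}=0$; since the $q$-step BDF operator annihilates constant sequences and $M^0=\cdots=M^{q-1}=1$, induction yields $M^k=1$ for all $k$. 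Together with $U^k\ge 0$ this forces $0\le U^k_i\le 1/h$ for every $i$, which is the asserted uniform-in-$k$ bound and, incidentally, keeps $F_d[U^k]$ and $W^k$ well defined along the iteration.

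Consistency is a Taylor-expansion computation independent of the above. One inserts the grid restrictions of a smooth solution $n$ of \eqref{qde} into \eqref{dvdm.q}: the $q$-step BDF operator $\delta^{1,q}_{k}$ reproduces $n_t$ with an $O(\tau^q)$ error, each central difference $\delta^{\langle 1\rangle}$ is second-order accurate in $h$, and — this is the one point that uses the explicit form of \eqref{gen.dvd} (resp.\ \eqref{q1.dvd}) — the discrete variational derivative evaluated on smooth data approximates $\delta F[n]/\delta n=-\Delta\sqrt n/\sqrt n$ with an $O(h^2)$ error. Combining these three contributions, the local truncation error of \eqref{dvdm.q} is $O(\tau^q+h^2)$, i.e.\ the scheme is consistent of order $(q,2)$; for $q=1$ the identical computation with \eqref{q1.dvd} gives order $(1,2)$.

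The main obstacle is not any single one of these steps but the ingredient silently required by them: the \emph{solvability} of the nonlinear implicit system \eqref{dvdm.q} at each time level, together with the nonnegativity (indeed strict positivity) of $U^{k}$ needed both for the summation-by-parts step above and for $F_d[U^k]$ and $W^k$ to be meaningful. Since the discrete operator inherits the absence of a maximum principle of the fourth-order equation, this cannot be read off directly; one has to set up a topological fixed-point argument (Brouwer or Leray--Schauder, or a degree argument) in which the a priori bound on the discrete Fisher information from the first step provides the needed coercivity and compactness, in the same spirit as the proof of Theorem \ref{thm.bdf2.ex}. Establishing existence while keeping the iterate bounded away from zero is where the bulk of the technical work lies.
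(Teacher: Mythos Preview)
Your treatment of dissipation, mass conservation, and the discrete chain rule is correct and matches the paper's setup; these steps are built into the construction and require only the summation-by-parts argument you describe. Two points, however, deserve comment.

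\medskip
\textbf{Uniform boundedness.} Your bound $0\le U^k_i\le 1/h$ is uniform in $k$ but blows up as $h\to 0$. The paper obtains an $h$-independent bound by a different route: since $F_d[U^k]\le F_d[U^0]$ controls the discrete $H^1$-seminorm of $V^k=\sqrt{U^k}$, the discrete Poincar\'e--Wirtinger inequality gives $|V^k_i-M_k|^2\le F_d[U^0]$ with $M_k=h\sum_i V^k_i$; then Jensen's inequality and mass conservation yield $M_k\le 1$, whence $|U^k_i|\le 2F_d[U^0]+2$. This is strictly sharper and is what one actually wants for a numerical scheme.

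\medskip
\textbf{Consistency: the real gap.} You assert that the discrete variational derivative \eqref{gen.dvd} approximates $\delta F[n]/\delta n$ with an $O(h^2)$ error. This is not accurate and skips the substantive part of the proof. Recall that \eqref{gen.dvd} reads
\[
  \frac{\delta F_d}{\delta(U^{k+1},\ldots,U^{k-q+1})_i}
  = -\frac{\delta_i^{\langle2\rangle}V^{k+1}}{V_i^{k+1}}
    - r_{\rm corr}\,\frac{\delta_{k+1}^{1,q}\delta_i^{\langle2\rangle}U^{k+1}}{V_i^{k+1}} .
\]
The first term is indeed $\delta F[n]/\delta n + O(h^2)$, but the second term carries the correction factor $r_{\rm corr}$ from \eqref{rcorr}, a \emph{ratio} of two spatial sums each involving the temporal BDF operator. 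Nothing in the definition makes its smallness obvious: one has to Taylor-expand numerator and denominator separately, use the periodic-boundary trick to symmetrize the forward/backward differences, and verify that the numerator is $O(\tau^q)+O(h^2)$ while the denominator stays bounded away from zero. Only then does $r_{\rm corr}=O(\tau^q)+O(h^2)$ follow, and with it the order-$(q,2)$ consistency of the scheme. This computation is the core of the paper's proof; your paragraph does not touch it.

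\medskip
Finally, your closing paragraph on solvability and positivity is well observed, but note that the paper's theorem and proof do not address existence of the discrete iterates either; these are tacitly assumed. So while the concern is legitimate, it is not part of what you are being asked to prove here.
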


We say that a scheme is consistent of order $(q,m)$ if the truncation error
is of the order $O(\tau^q)+O(h^m)$ for $\tau\to 0$ and $h\to 0$.

The paper is organized as follows. The analysis of the BDF2 time approximation
is performed in Section \ref{sec.bdf2}, and Theorems \ref{thm.bdf2.ex} and
\ref{thm.bdf2.conv} are proved. The fully discrete variational derivative method
is detailed in Section \ref{sec.dvdm}, and Theorem \ref{thm.dvdm} is proved.
Numerical experiments in Section \ref{sec.num} illustrate the entropy stability,
entropy dissipation, and energy (Fisher information) dissipation, even in
situations not covered by the above theorems.


\section{BDF2 time approximation}\label{sec.bdf2}

First, we collect some auxiliary results. The following lemma is needed
to show a priori bounds for the semi-discrete solutions to the DLSS equation.

\begin{lemma}\label{lem.ineq}
It holds for all $a$, $b$, $c\in\R$,
\begin{align}
  2\left(\frac32 a-2b+\frac12 c\right)a &\ge \frac32 a^2-2b^2+\frac12 c^2
  + (a-b)^2 - (b-c)^2, \label{ineq1} \\
  2\left(\frac32 a-2b+\frac12 c\right)a &\ge \frac12\big(a^2+(2a-b)^2\big)
  - \frac12\big(b^2+(2b-c)^2\big). \label{ineq2}
\end{align}
\end{lemma}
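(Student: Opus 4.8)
The plan is to prove both algebraic inequalities by reducing each to the nonnegativity of a quadratic form in the variables $a$, $b$, $c$ (equivalently, a sum of squares decomposition). First I would expand the left-hand side $2(\tfrac32 a - 2b + \tfrac12 c)a = 3a^2 - 4ab + ac$. For \eqref{ineq1}, I would subtract the right-hand side $\tfrac32 a^2 - 2b^2 + \tfrac12 c^2 + (a-b)^2 - (b-c)^2$ from the left-hand side and collect terms; after expanding $(a-b)^2 = a^2 - 2ab + b^2$ and $(b-c)^2 = b^2 - 2bc + c^2$, the right-hand side becomes $\tfrac52 a^2 - 2ab - 2b^2 + \tfrac12 c^2 + 2bc$. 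The difference LHS $-$ RHS is then $\tfrac12 a^2 - 2ab + 2b^2 - ac - 2bc - \tfrac12 c^2$, and I would check that this equals $\tfrac12(a - 2b + c)^2 - (\text{something})$ or, more carefully, complete the square to see exactly which nonnegative combination it is; the most robust route is to write it as $\tfrac12(a - 2b - c)^2 + \text{correction}$ and verify the correction vanishes, i.e. the difference is a perfect square (or a nonnegative quadratic form, which one checks via its $2\times 2$ principal minors after eliminating one variable).

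For \eqref{ineq2}, I would proceed identically: expand the right-hand side $\tfrac12(a^2 + (2a-b)^2) - \tfrac12(b^2 + (2b-c)^2) = \tfrac12 a^2 + \tfrac12(4a^2 - 4ab + b^2) - \tfrac12 b^2 - \tfrac12(4b^2 - 4bc + c^2) = \tfrac52 a^2 - 2ab - 2b^2 + 2bc - \tfrac12 c^2$. Curiously this is the same expression as the right-hand side of \eqref{ineq1} after simplification, so in fact \eqref{ineq1} and \eqref{ineq2} are the \emph{same} inequality written two ways, and a single sum-of-squares identity establishes both. I would therefore state one identity of the form
\[
  2\left(\tfrac32 a - 2b + \tfrac12 c\right)a - \left(\tfrac32 a^2 - 2b^2 + \tfrac12 c^2 + (a-b)^2 - (b-c)^2\right) = \tfrac12(a - 2b + c)^2,
\]
verify it by direct expansion of both sides, and observe that the same left-side minus the right-side of \eqref{ineq2} yields the identical quantity. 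Alternatively, to make the G-stability connection transparent, I would note \eqref{ineq2} is exactly the statement that the BDF2 method is G-stable with the classical Dahlquist norm, so one could cite \cite{Dah78} for \eqref{ineq2} and then derive \eqref{ineq1} from it by the algebraic identity $\tfrac12(a^2 + (2a-b)^2) - \tfrac12(b^2+(2b-c)^2) = \tfrac32 a^2 - 2b^2 + \tfrac12 c^2 + (a-b)^2 - (b-c)^2$, which again is a one-line expansion.

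The main obstacle is essentially bookkeeping: there is no conceptual difficulty, only the risk of a sign or coefficient slip when expanding the squares. To guard against this I would (i) verify the claimed sum-of-squares identity at a few test triples such as $(a,b,c) = (1,0,0)$, $(1,1,0)$, $(1,1,1)$, $(0,1,0)$ to pin down the constants, and (ii) present the final proof as a bare chain of equalities so the reader can check the expansion line by line, concluding with $\ge 0$ because a square is nonnegative. Since equality in \eqref{ineq1}--\eqref{ineq2} holds precisely when $a - 2b + c = 0$, I would also remark on this, as it clarifies why the inequality is tight and cannot be improved — which is relevant to the sharpness of the entropy-dissipation estimate \eqref{ent.diss}.
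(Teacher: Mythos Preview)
Your approach is exactly the paper's: both inequalities are proved via the single identity
\[
  2\left(\tfrac32 a - 2b + \tfrac12 c\right)a - \text{(RHS)} = \tfrac12(a-2b+c)^2 \ge 0,
\]
and your observation that the two right-hand sides coincide after expansion is correct. Do fix the arithmetic in your intermediate lines, though: the expanded right-hand side of \eqref{ineq1} is $\tfrac52 a^2 - 2ab - 2b^2 + 2bc - \tfrac12 c^2$ (you have $+\tfrac12 c^2$), and consequently LHS $-$ RHS is $\tfrac12 a^2 - 2ab + 2b^2 + ac - 2bc + \tfrac12 c^2$ (your signs on $ac$ and $c^2$ are flipped), which is precisely $\tfrac12(a-2b+c)^2$ as you ultimately state.
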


\begin{proof}
We calculate
$$
  2\left(\frac32 a-2b+\frac12 c\right)a = \frac32 a^2-2b^2+\frac12 c^2
  + (a-b)^2 - (b-c)^2 + \frac12(a-2b+c)^2,
$$
which proves the first assertion. Because of
$$
  2\left(\frac32 a-2b+\frac12 c\right)a = \frac12\big(a^2+(2a-b)^2\big)
  - \frac12\big(b^2+(2b-c)^2\big) + \frac12(a-2b+c)^2,
$$
the second assertion follows as well.
\end{proof}

We also recall the following inequality (see \cite[Lemma 2.2]{JuMa08} for a proof).

\begin{lemma}\label{lem.H2}
Let $d\ge 2$ and $\sqrt{n}\in H^2(\T)\cap W^{1,4}(\T)\cap L^\infty(\T)$ with
$\inf_\T n>0$. Then, for any $(\sqrt{d}-1)^2/(d+2)<\alpha<(\sqrt{d}+1)^2/(d+2)$,
$\alpha\neq 1$, $$
  \frac{1}{4(\alpha-1)}\int_\T n\pa_{ij}^2(\log n)\pa_{ij}^2(n^{\alpha-1})\dx
  \ge \kappa_\alpha\int_\T(\Delta n^{\alpha/2})^2 \dx
$$
and for $\alpha=1$,
$$
  \frac14\int_\T n(\pa_{ij}^2(\log n))^2 \dx \ge \kappa_1\int_\T(\Delta\sqrt{n})^2\dx,
$$
where
$$
  \kappa_\alpha = \frac{p(\alpha)}{\alpha^2(p(\alpha)-p(0))}>0 \quad\mbox{and}\quad
  p(\alpha) = -\alpha^2 + \frac{2(d+1)}{d+2}\alpha - \left(\frac{d-1}{d+2}\right)^2.
$$
\end{lemma}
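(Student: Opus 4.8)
I would prove Lemma~\ref{lem.H2} following the systematic integration-by-parts technique of \cite{JuMa08}. The plan is to rewrite both sides as integrals of polynomial expressions in the first and second derivatives of $y:=\log n$ against the weight $n^\alpha=e^{\alpha y}$, and then to certify the resulting inequality by a finite set of integration-by-parts identities together with a single pointwise quadratic-form estimate. Setting $n=e^y$ and using $\pa_{ij}^2 n^{\alpha-1}=(\alpha-1)n^{\alpha-1}\big((\alpha-1)\pa_i y\,\pa_j y+\pa_{ij}^2 y\big)$, one finds that the left-hand side equals
$$
  \frac14\int_\T n^\alpha\Big((\pa_{ij}^2 y)^2+(\alpha-1)\,\pa_{ij}^2 y\,\pa_i y\,\pa_j y\Big)\dx ,
$$
while $\Delta n^{\alpha/2}=n^{\alpha/2}\big(\tfrac\alpha2\Delta y+\tfrac{\alpha^2}{4}|\na y|^2\big)$ turns the right-hand side into a linear combination of the five integrals $\int_\T n^\alpha(\pa_{ij}^2 y)^2\dx$, $\int_\T n^\alpha(\Delta y)^2\dx$, $\int_\T n^\alpha\,\pa_{ij}^2 y\,\pa_i y\,\pa_j y\dx$, $\int_\T n^\alpha\,\Delta y\,|\na y|^2\dx$, and $\int_\T n^\alpha|\na y|^4\dx$.

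Next I would exploit that, by periodicity, $\int_\T\pa_k\big(n^\alpha\Phi_k\big)\dx=0$ for every sufficiently regular vector field $\Phi$. Taking $\Phi_k$ successively equal to $\pa_k y\,|\na y|^2$, $\pa_k y\,\Delta y$, and $\pa_{k\ell}^2 y\,\pa_\ell y$, and expanding the divergence---using $\pa_k\pa_{k\ell}^2 y=\pa_\ell\Delta y$ and one further integration by parts for the terms involving third derivatives---yields two linearly independent algebraic identities among the five integrals above (the third choice of $\Phi$ producing only a trivial identity). Since each identity equals zero, I may add to the difference (LHS)$\,-\,\kappa\,$(RHS) an arbitrary linear combination $\lambda_1(\cdot)+\lambda_2(\cdot)$ of them; this leaves a single integral $\int_\T n^\alpha\,Q_{\lambda_1,\lambda_2,\kappa}(\na y,\na^2 y)\dx$ whose integrand $Q$ is a quadratic form in the entries of the Hessian with coefficients that are quadratic in $\na y$. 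Splitting $\na^2 y=\tfrac1d(\Delta y)\,\mathrm{Id}+Z$ with $\mathrm{tr}\,Z=0$, setting $t=\Delta y$ and $s=|\na y|^2$, and disposing of the one term containing $Z$ by Cauchy--Schwarz (equivalently, by choosing $\lambda_1$ so that this term vanishes), I would reduce the required pointwise nonnegativity of $Q$ to positive semidefiniteness of an explicit $2\times2$ matrix in the variables $(t,s)$. This is where the space dimension enters, through the factor $1/d$ from the trace part of the Hessian.

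Finally I would optimise over the remaining free parameter. The trace and determinant conditions for the $2\times2$ matrix are, for given $\kappa$, solvable in $\lambda_2$ precisely when $\kappa$ does not exceed a threshold governed by the polynomial $p(\alpha)=-\alpha^2+\tfrac{2(d+1)}{d+2}\alpha-\big(\tfrac{d-1}{d+2}\big)^2$, and the largest admissible value turns out to be exactly $\kappa_\alpha=p(\alpha)/\big(\alpha^2(p(\alpha)-p(0))\big)$; since $p$ is a downward parabola with roots $(\sqrt d-1)^2/(d+2)$ and $(\sqrt d+1)^2/(d+2)$ and $p(0)<0$, this quantity is positive precisely on the stated open $\alpha$-interval. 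The case $\alpha=1$ is treated either directly---then $n^{\alpha-1}\equiv1$, the left-hand side reduces to $\tfrac14\int_\T n(\pa_{ij}^2 y)^2\dx$, and the same computation applies---or by letting $\alpha\to1$ above, which gives $\kappa_1=p(1)/(p(1)-p(0))$. The main obstacle is not conceptual but the careful bookkeeping of the integration-by-parts identities and the verification that the reduced quadratic form is nonnegative with the optimal constant, which is exactly the decision-procedure step in the J\"ungel--Matthes construction; a minor additional point is that all integrations by parts must first be justified for smooth, strictly positive $n$ and then extended to $\sqrt n\in H^2(\T)\cap W^{1,4}(\T)\cap L^\infty(\T)$ with $\inf_\T n>0$ by a density argument.
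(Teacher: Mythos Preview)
The paper does not give its own proof of this lemma; it simply recalls the statement and refers to \cite[Lemma~2.2]{JuMa08} for the argument. Your sketch is a faithful outline of precisely that proof: the exponential substitution $n=e^y$, the reduction of both sides to linear combinations of the five ``basis'' integrals $\int_\T n^\alpha(\pa_{ij}^2y)^2$, $\int_\T n^\alpha(\Delta y)^2$, $\int_\T n^\alpha\pa_{ij}^2y\,\pa_iy\,\pa_jy$, $\int_\T n^\alpha\Delta y\,|\na y|^2$, $\int_\T n^\alpha|\na y|^4$, the use of divergence-type integration-by-parts identities to shift freely among them, the trace/traceless splitting of the Hessian, and the final optimisation yielding $\kappa_\alpha=p(\alpha)/(\alpha^2(p(\alpha)-p(0)))$.

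One small inaccuracy in your description: the choices $\Phi_k=\pa_ky\,\Delta y$ and $\Phi_k=\pa_{k\ell}^2y\,\pa_\ell y$ each produce a third-order term $\int_\T n^\alpha\pa_ky\,\pa_k\Delta y$, and it is their \emph{difference} that gives a clean identity among the five second-order integrals; neither one is ``trivial'' on its own. This does not affect the count---together with the identity coming from $\Phi_k=\pa_ky\,|\na y|^2$ you still obtain exactly two independent shift relations---nor the conclusion.
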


\begin{proof}[Proof of Theorem \ref{thm.bdf2.ex}.]
Given $v_0=n_0^{\alpha/2}$, the existence of a nonnegative weak solution
$v_1\in H^2(\T)$ to \eqref{alpha_euler} is shown in \cite{JuMa08}. 
Assume that $v_2,\ldots,v_{k}\in H^2(\T)$ are solutions to \eqref{weak_alpha}.
We introduce the variable $y$ by $v_{k+1}=e^{\alpha y/2}$ such that $n_{k+1}=e^y$. First, we prove 
the existence of a weak solution $y\in H^2(\T)$ to the regularized equation
\begin{equation}\label{reg.form}
  \frac{2}{\alpha\tau}e^{(1-\alpha/2)y}\left(\frac32e^{\alpha y/2} - 2v_k 
  + \frac12 v_{k-1}\right) + \frac12\pa_{ij}^2(e^y\pa_{ij}^2y) 
  + \eps L(y) = 0,
\end{equation}
where $\eps>0$ and 
$$
L(y) = \Delta^2 y 
  - \diver(|\nabla y|^2\nabla y) 
  + y. 
$$

{\em Step 1: Definition of the fixed-point operator.} Given $z\in W^{1,4}(\T)$ and
$\sigma\in [0,1]$, we define on $H^2(\T)$ the forms
\begin{align*}
  a(y,\phi) &= \frac{1}{2}\int_{\T}e^z\pa_{ij}^2y\pa_{ij}^2\phi \dx 
  + \eps\int_{\T}\big(\Delta y\Delta\phi 
  + |\nabla z|^2\na y\cdot\na\phi + y\phi\big)\dx, \\
  f(\phi) &= -\frac{2\sigma}{\alpha\tau}\int_{\T}e^{(1-\alpha/2)z}
  \left(\frac32 e^{\alpha z/2} - 2v_{k} + \frac12 v_{k-1}\right)\phi \dx.
\end{align*}
Since $H^2(\T)\hookrightarrow W^{1,4}(\T)\hookrightarrow L^\infty(\T)$ with
continuous embeddings (remember that $d\le3$), these mappings are well defined and continuous. Furthermore,
by the Poincar\'e inequality for periodic functions with constant $C_P>0$,
the bilinear form $a$ is coercive,
\begin{align*}
  \eps\|y\|_{H^2(\T)}^2 &= \eps\int_\T\big(|\na^2 y|^2 + |\na y|^2 + y^2\big)\dx
  \le \eps\int_\T\big((C_P^2+1)|\na^2 y|^2 + y^2\big)\dx \\
  &= \eps\int_\T\big((C_P^2+1)(\Delta y)^2 + y^2\big)\dx
  \le \eps(C_P^2+1)\int_\T((\Delta y)^2 + y^2)\dx \\
  &\le (C_P^2+1) a(y,y).
\end{align*}
By Lax-Milgram's lemma, there exists a unique solution $y\in H^2(\T)$ to
$$
  a(y,\phi) = f(\phi) \quad\mbox{for all }\phi\in H^2(\T).
$$
This defines the fixed-point operator $S:W^{1,4}(\T)\times[0,1]\to W^{1,4}(\T)$,
$S(z,\sigma)=y$. It holds $S(y,0)=0$ for all $y\in W^{1,4}(\T)$, and $S$
is continuous and compact, in view of the compact embedding
$H^2(\T)\hookrightarrow W^{1,4}(\T)$. In order to apply the Leray-Schauder theorem, 
it remains to show that there exists
a uniform bound in $W^{1,4}(\T)$ for all fixed points of $S(\cdot,\sigma)$.

{\em Step 2: A priori bound.} Let $y\in H^2(\T)$ be a 
fixed point of $S(\cdot,\sigma)$ for some $\sigma\in[0,1]$. We employ the
test function $\phi=y$ in \eqref{reg.form}.
This gives 
\begin{align}
  0 &=  \frac{2\sigma}{\alpha\tau} \int_{\T}e^{(1-\alpha/2)y}\left(\frac32 e^{\alpha y/2} - 2v_{k} 
  + \frac12 v_{k-1}\right)y \dx  \nonumber\\
  &\phantom{xx}{}
  + \frac{1}{2}\int_{\T}e^y (\pa_{ij}^2y)^2\dx 
  + \eps\int_{\T}\big((\Delta y)^2 + |\nabla y|^4 + y^2 \big)\dx. \label{reg.weak1}
\end{align}
To estimate the first integral, we distinguish the domains $\{y<0\}$ and $\{y\ge 0\}$:
\begin{align*}
  \frac{2\sigma}{\alpha\tau} \int_{\T}e^{(1-\alpha/2)y}&\left(\frac32 e^{\alpha y/2} - 2v_{k} 
  + \frac12 v_{k-1}\right)y \dx \\
  &= \frac{\sigma}{\alpha\tau}\int_{\{y < 0\}}\big(3e^{y}y - 4e^{(1-\alpha/2)y}v_{k}y 
  +  e^{(1-\alpha/2)y}v_{k-1}y\big)\dx \\ 
  &\phantom{xx}{} + \frac{\sigma}{\alpha\tau}\int_{\{y\geq 0\}}
  \big(3e^{y}y - 4e^{(1-\alpha/2)y}v_{k}y 
  +  e^{(1-\alpha/2)y}v_{k-1}y\big)\dx.
\end{align*}
The first integral on the right-hand side is estimated by using the Young
inequalities $-4e^{(1-\alpha/2)y}v_{k}y\ge -2e^{(2-\alpha)y} y^2 - 2v_k^2$ and
$e^{(1-\alpha/2)y}v_{k-1}y\ge -\frac12 e^{(2-\alpha)y} y^2 - \frac12v_{k-1}^2$:
\begin{align*}
  \frac{\sigma}{\alpha\tau}\int_{\{y < 0\}} & \big(3e^{y}y - 4e^{(1-\alpha/2)y}v_{k}y 
  +  e^{(1-\alpha/2)y}v_{k-1}y\big)\dx \\
  &\ge \frac{\sigma}{\alpha\tau}\int_{\{y < 0\}}\left(3e^y y - \frac52 e^{(2-\alpha)y} y^2 - 2v_k^2
  -\frac12 v_{k-1}^2\right)\dx \\
  &= \frac{\sigma}{\alpha\tau}\int_{\{y < 0\}}\left(e^y(y-1) + 1 
  + \left(1+2y\right)e^y -\frac52e^{(2-\alpha)y} y^2 - 1 - 2v_k^2 - \frac12v_{k-1}^2\right)\dx.
\end{align*}
Since $y\mapsto (1+2y)e^y -\frac52e^{(2-\alpha)y} y^2 - 1$, $y<0$, is bounded from below (remember that $\alpha<2$), we find that
\begin{align*}
  \frac{\sigma}{\alpha\tau}\int_{\{y < 0\}} &\big( 3e^{y}y - 4e^{(1-\alpha/2)y}v_{k}y 
  +  e^{(1-\alpha/2)y}v_{k-1}y\big)\dx\\
  &\ge \frac{\sigma}{\alpha\tau}\int_{\{y < 0\}} \big(e^y(y-1) + 1\big)\dx - \frac{\sigma}{\alpha\tau}c_1
  - \frac{\sigma}{\alpha\tau}\int_{\{y < 0\}}
  \left(2v_k^2 + \frac12 v_{k-1}^2\right)\dx, 
\end{align*}
where $c_1>0$ depends only on the lower bound of $y\mapsto (1+2y)e^y -\frac52 e^{(2-\alpha)y} y^2 - 1$, 
$y<0$, and $\mbox{meas}(\T)$. For the remaining integral over $\{y\ge 0\}$, 
we employ the Young inequalities $-4e^{(1-\alpha/2)y} v_k y\ge -2e^{(2-\alpha)y} - y^4 - v_k^4$ and
$e^{(1-\alpha/2)y}v_{k-1}y\ge -\frac12 e^{(2-\alpha)y} - \frac14 y^4 - \frac14 v_{k-1}^4$:
\begin{align*}
  \frac{\sigma}{\alpha\tau}\int_{\{y \ge 0\}} & \big(3e^{y}y - 4e^{(1-\alpha/2)y}v_{k}y 
  +  e^{(1-\alpha/2)y}v_{k-1}y\big)\dx \\
  &\ge \frac{\sigma}{\alpha\tau}\int_{\{y \ge 0\}}\Big(3e^y y - \frac52 e^{(2-\alpha)y} 
  - \frac54 y^4 - v_k^4 - \frac14 v_{k-1}^4\Big)\dx \\
  &= \frac{\sigma}{\alpha\tau}\int_{\{y \ge 0\}}\Big(e^y(y-1) + 1 + \Big((1+2y)e^y - \frac52 e^{(2-\alpha)y} - \frac54 y^4 - 1\Big) \\
&\phantom{xxxxxxxxxxx}{} - v_k^4 - \frac14 v_{k-1}^4\Big)\dx.
\end{align*}
The mapping $y\mapsto (1+2y)e^y - \frac52 e^{(2-\alpha)y} - \frac54 y^4 - 1$, $y\geq 0$, is bounded from below
which implies the existence of a constant $c_2>0$ such that
\begin{align*}
  \frac{\sigma}{\alpha\tau}\int_{\{y \ge 0\}} &\big(3e^{y}y - 4e^{(1-\alpha/2)y}v_{k}y 
  +  e^{(1-\alpha/2)y}v_{k-1}y\big)\dx\\
  &\ge \frac{\sigma}{\alpha\tau}\int_{\{y \ge 0\}}\big(e^y(y-1)+1\big)\dx
  - \frac{\sigma}{\alpha\tau}c_2 - \frac{\sigma}{\alpha\tau}\int_{\{y \ge 0\}}
  \left(v_k^4+\frac14 v_{k-1}^4\right)\dx.
\end{align*}
Summarizing the estimates for both integrals over $\{y>0\}$ and $\{y\ge 0\}$,
it follows that
\begin{align}
  \frac{2\sigma}{\alpha\tau} \int_{\T} & e^{(1-\alpha/2)y}\left(\frac32 e^{\alpha y/2} - 2v_{k} 
  + \frac12 v_{k-1}\right)y \dx \label{aux1} \ge \frac{\sigma}{\alpha\tau}\int_\T \big(e^y(y-1)+1\big)\dx\\
  &\phantom{xxxxxxx}{} - \frac{\sigma}{\alpha\tau}\int_\T
  \left(2v_k^2 + v_k^4 + \frac12 v_{k-1}^2 + \frac14 v_{k-1}^4\right)\dx
  - \frac{\sigma}{\alpha\tau}(c_1+c_2). \nonumber
\end{align}

For the second integral in \eqref{reg.weak1}, we use Lemma \ref{lem.H2}:
$$
  \frac12\int_{\T}e^y (\pa_{ij}^2y)^2\dx 
  \geq 2\kappa_1\int_{\T}\big(\Delta e^{y/2}\big)^2\dx,
$$
where $\kappa_1>0$ depends only on the space dimension $d$. With this estimate
and \eqref{aux1}, equation \eqref{reg.weak1} implies that
\begin{align*}
  \frac{\sigma}{\alpha\tau}\int_{\T} & \big(e^y(y-1)+1\big)\dx 
  + 2\kappa_1\int_{\T}\big(\Delta e^{y/2}\big)^2\dx 
  + \eps \int_{\T}\big((\Delta y)^2 + |\nabla y|^4 + y^2 \big)\dx \\ 
  &\leq \frac{\sigma}{\alpha\tau}\int_{\T}\left(2v_{k}^2 + v_{k}^4 
  + \frac12 v_{k-1}^2 + \frac14 v_{k-1}^4\right)\dx + \frac{\sigma}{\alpha\tau}(c_1 + c_2).
\end{align*} 
By the definition of the entropy, this inequality can be written as
\begin{align}
  E_1[n] &+ \frac{2\alpha\tau\kappa_1}{\sigma}\int_{\T}\big(\Delta e^{y/2}\big)^2\dx 
  + \frac{\eps\alpha\tau}{\sigma} \int_{\T} \big((\Delta y)^2 + |\nabla y|^4 + y^2\big)\dx \nonumber \\ 
  &\quad\leq \int_{\T}\left(2v_{k}^2 + v_{k}^4 + \frac12 v_{k-1}^2 
  + \frac14 v_{k-1}^4\right)\dx + c_1 + c_2.  \label{main.ap2}
\end{align}
The right-hand side gives a uniform (with respect to $\sigma$) bound since $v_{k-1}$, $v_k\in W^{1,4}(\T)$.
Hence, by the Poincar\'e inequality we obtain the $H^2$-bound
$$
  \|y\|_{H^2(\T)}^2 \le C\int_\T\big((\Delta y)^2 + y^2\big)\dx \le C,
$$
where the constant $C>0$ depends on $\alpha$, $\eps$, $\tau$, $v_k$,
and $v_{k-1}$ but not on $\sigma$. The continuous embedding $H^2(\T)\hookrightarrow
W^{1,4}(\T)$ then implies the desired uniform bound, $\|y\|_{W^{1,4}(\T)}
\le C$. Leray-Schauder's fixed-point theorem provides the existence of a fixed
point $y_\eps$ of $S(y,1)=y$, i.e.\ of a solution to \eqref{reg.form}.

{\em Step 3: Limit $\eps\to 0$.} Let $y_\eps$ be a solution to \eqref{reg.form},
constructed in the previous steps. Set $v_\eps:=e^{\alpha y_\eps/2}$ and $n_\eps:=e^{y_\eps}$. Then $v_\eps$ solves
\begin{equation}\label{veps}
  \frac{2}{\alpha\tau}v_\eps^{2/\alpha-1}
  \left(\frac32 v_\eps - 2v_k + \frac12 v_{k-1}\right)
  + \pa_{ij}^2\left(\frac{1}{\alpha}v_\eps^{2/\alpha-1}\pa_{ij}^2 v_\eps
  - \alpha \pa_i(v_\eps^{1/\alpha})\pa_j(v_\eps^{1/\alpha})\right)
  + \eps L(y_\eps) = 0.
\end{equation}
The goal is to pass to the limit $\eps\to 0$ in this equation.

Let $\alpha > 1$. We employ the test function $e^{(\alpha-1)y_\eps}/(\alpha-1)\in H^2(\T)$ 
in \eqref{reg.form} and find that
\begin{align*}
  0 &= \frac{2}{\alpha(\alpha-1)}\int_\T 
  \left(\frac32 v_\eps - 2v_k + \frac12 v_{k-1}\right)v_\eps\dx
  + \frac{\tau}{2(\alpha-1)}\int_\T e^{y_\eps}\pa_{ij}^2 y_\eps\pa_{ij}^2
  (e^{(\alpha-1)y_\eps})\dx \\
  &\phantom{xx}{}
  + \frac{\eps\tau}{\alpha-1}\langle L(y_\eps),e^{(\alpha-1)y_\eps}\rangle_{H^{-2},H^2}.
\end{align*}
Inequality \eqref{ineq1} shows that
\begin{align*}
  \frac{2}{\alpha(\alpha-1)}\int_\T 
  \left(\frac32 v_\eps - 2v_k + \frac12 v_{k-1}\right)v_\eps\dx
  &\ge \frac{1}{\alpha(\alpha-1)}\int_\T\left(\frac32 v_\eps^2 - 2v_k^2 
  + \frac12 v_{k-1}^2\right)\dx \\
  &\phantom{xx}{}+ \frac{1}{\alpha(\alpha-1)}\int_\T\big((v_\eps-v_k)^2
  - (v_k-v_{k-1})^2\big)\dx.
\end{align*}
The integral involving the second derivatives is again estimated by using
Lemma \ref{lem.H2}:
$$
  \frac{\tau}{2(\alpha-1)}\int_\T e^{y_\eps}\pa_{ij}^2 y_\eps\pa_{ij}^2
  (e^{(\alpha-1)y_\eps})\dx
  \ge 2 \kappa_\alpha\tau\int_\T(\Delta e^{\alpha y_\eps/2})^2 \dx
  = 2\kappa_\alpha\tau\int_\T(\Delta v_\eps)^2 \dx.
$$
Now let us consider the $\eps$-term and show that $\langle L(y_\eps),e^{(\alpha-1)y_\eps}\rangle_{H^{-2},H^2}$ is bounded from below uniformly in $\eps$.
By construction, $v_\eps$ and $n_\eps$ are strictly positive
since $y_\eps\in H^2(\T)\hookrightarrow L^\infty(\T)$. Therefore, we can write (cf.~\cite[Section 4.1]{JuMa08})
\begin{align*}
\langle L(y_\eps),e^{(\alpha-1)y_\eps}&\rangle_{H^{-2},H^2} = 4(\alpha-1)\int_{\T}\Big(e^{(\alpha-1)y_\eps}\Big(\frac{\Delta e^{y_\eps/2}}{e^{y_\eps/2}} - (2-\alpha)\Big|\frac{\nabla e^{y_\eps/2}}{e^{y_\eps/2}}\Big|^2\Big)^2\dx\\ 
&\phantom{xx}{} + 4(\alpha^2-1)(3-\alpha)\int_{\T}e^{(\alpha-1)y_\eps} \Big|\frac{\nabla e^{y_\eps/2}}{e^{y_\eps/2}}\Big|^4\dx 
 + \int_{\T}y_\eps e^{(\alpha-1)y_\eps}\dx \geq -C,
\end{align*} 
where $C>0$ depends only on $\alpha$. We have used the fact that $x e^{(\alpha-1)x}\geq -1/((\alpha-1)e)$ for all $x\in\R$.

Summarizing the above inequalities, we obtain
\begin{align}
  \frac{1}{\alpha(\alpha-1)}\int_\T & \left(\frac32 v_\eps^2 - 2v_k^2 
  + \frac12 v_{k-1}^2\right)\dx
  + \frac{1}{\alpha(\alpha-1)}\int_\T\big((v_\eps-v_k)^2 - (v_k-v_{k-1})^2\big)\dx\nonumber \\
  &{}+ 2\tau\kappa_\alpha\int_\T(\Delta v_\eps)^2 \dx
  \le C\eps.\label{main.ap1}
\end{align}
Inequality \eqref{main.ap1} provides the estimate for
$(v_\eps)$ in $H^2(\T)$ uniformly in $\eps$. Therefore, there exists a limit
function $v\in H^2(\T)$ such that, up to a subsequence, as $\eps\to 0$,
\begin{align*}
  v_\eps \rightharpoonup v &\quad\mbox{weakly in }H^2(\T), \\
  v_\eps\to v &\quad\mbox{strongly in }W^{1,4}(\T)\mbox{ and }L^\infty(\T).
\end{align*}
Consequently, since $2/\alpha-1>0$,
\begin{equation}\label{conv1}
  v_\eps^{2/\alpha-1}\pa_{ij}^2 v_\eps \rightharpoonup 
  v^{2/\alpha-1}\pa_{ij}^2 v \quad\mbox{weakly in } L^2(\T),\ i,j=1,\ldots,d.
\end{equation}

According to the Lions-Villani lemma on the regularity
of the square root of Sobolev functions (see the version in \cite[Lemma 26]{BJM11}), 
there exists $C>0$ independent of $\eps$ such that 
$$
  \|\sqrt{v_\eps}\|_{W^{1,4}(\T)}^2 \le C\|v_\eps\|_{H^2(\T)}\le C.
$$
Since $1/2 < 1/\alpha < 1$, Proposition A.1 in \cite{JuMi09} shows that
the strong convergence $v_\eps\to v$ in $H^1(\T)$ and the boundedness of
$(\sqrt{v_\eps})$ in $W^{1,4}(\T)$ imply that
$$
  v_\eps^{1/\alpha} \to v^{1/\alpha} \quad\mbox{strongly in }W^{1,2\alpha}(\T).
$$
Hence, we have
\begin{equation}\label{conv2}
  \pa_i(v_\eps^{1/\alpha})\pa_j(v_\eps^{1/\alpha})\to 
  \pa_i(v^{1/\alpha})\pa_j(v^{1/\alpha}) \quad\mbox{strongly in }
  L^\alpha(\T),\ i,j=1,\ldots,d.
\end{equation}
Estimate \eqref{main.ap2} and $E_1[n]\ge0$ provide the uniform bound
$$
  \sqrt{\eps}\|y_\eps\|_{H^2(\T)} + \sqrt[4]{\eps}\|\na y_\eps\|_{L^4(\T)} 
\le C,
$$
which shows that
\begin{equation}\label{conv3}
  \eps L(y_\eps)\rightharpoonup 0 \quad\mbox{ weakly in }H^{-2}(\T).
\end{equation}

Using $\phi\in W^{2,\infty}(\Omega)$ as a test function in the weak formulation
of \eqref{veps}, the convergence results \eqref{conv1}-\eqref{conv3} allow us to 
pass to the limit $\eps\to 0$ in the resulting equation, which yields 
\eqref{weak_alpha} for $v_{k+1}:=v$. In fact, it is sufficient to use test functions
$\phi\in W^{2,\alpha/(\alpha-1)}(\T)$. 

If $\alpha=1$, the convergence result follows similarly as above based on the uniform bound $\|e^{y_\eps/2}\|_{H^2}\leq C$, which is obtained from a priori estimate \eqref{main.ap2}, using the elementary inequality $s \leq s(\log s - 1) + e$ for all $s \geq 0$, which gives a uniform $L^2$-bound for $e^{y_\eps}$. 
In that case, the test functions $\phi\in H^2(\T)$ can be used in \eqref{veps}.

{\em Step 4: Entropy stability.} 
Let $\alpha>1$. Using the test function $v_1^{2-2/\alpha}/(\alpha-1)$ in \eqref{alpha_euler}, 
it follows that
\begin{equation*}
  \frac{1}{\tau\alpha(\alpha-1)}\int_\T\big(v_1^2 - v_0^2 + (v_1-v_0)^2\big)\dx
  + \frac{1}{2(\alpha-1)}\int_\T v_1^{2/\alpha}\pa_{ij}^2(\log v_1^{2/\alpha})
  \pa_{ij}^2(v_1^{2-2/\alpha})\dx = 0.
\end{equation*}
By Lemma \ref{lem.H2}, we infer that
\begin{equation}\label{5.1}
  \frac{1}{\alpha(\alpha-1)}\int_\T \big(v_1^2 + (v_1-v_0)^2\big) \dx 
  + 2\tau\kappa_\alpha\int_\T(\Delta v_1)^2\dx
  \le \frac{1}{\alpha(\alpha-1)}\int_\T v_0^2 \dx.
\end{equation}
This gives an $H^2$-bound for $v_1$. 

Next, let $k\ge 1$ and let
$y_\eps$ be a weak solution to \eqref{reg.form}. Set $v_\eps=e^{\alpha y_\eps/2}$. 
The convergence results of Step 3 allow us to pass to the limit $\eps\to 0$
in \eqref{main.ap1}.
Using the weakly lower semi-continuity of $u\mapsto\|\Delta u\|_{L^2(\T)}^2$
on $H^2(\T)$, it follows that
\begin{align}
  \frac{1}{\alpha(\alpha-1)}\int_\T & \left(\frac32 v_{k+1}^2 - 2v_k^2 
  + \frac12 v_{k-1}^2\right)\dx
  + \frac{1}{\alpha(\alpha-1)}\int_\T\big((v_{k+1}-v_k)^2 - (v_k-v_{k-1})^2\big)\dx 
  \nonumber \\
  &{}+ 2\kappa_\alpha\tau\int_\T(\Delta v_{k+1})^2 \dx \le 0,  \label{5.k}
\end{align}
where, as before, $v_{k+1}=\lim_{\eps\to 0}v_\eps$. Summing
\eqref{5.1} and \eqref{5.k} over $k=1,\ldots,m-1$, some terms cancel and we
end up with
$$
  \frac{3}{2\alpha(\alpha-1)}\int_\T v_m^2 \dx
  + 2\kappa_\alpha\tau\sum_{k=0}^{m-1}\int_\T(\Delta v_{k+1})^2\dx
  \le \frac{1}{2\alpha(\alpha-1)}\int_\T(v_{m-1}^2 + v_1^2 + v_0^2)\dx.
$$
Set $a_m=\|v_m\|_{L^2(\Omega)}^2$ for $m\ge 0$. By \eqref{5.1}, $a_1\le a_0$.
Then, the above estimate shows that $a_m\le \frac13 a_{m-1} + \frac23 a_0$.
A simple induction argument gives $a_m\le a_0$ for all $m\ge 1$.
Therefore, 
$$
  \frac{1}{\alpha(\alpha-1)}\int_\T v_m^2\dx 
  + \frac43\kappa_\alpha\tau\sum_{k=1}^m\int_\T(\Delta v_k)^2\dx
  \le \frac{1}{\alpha(\alpha-1)}\int_\T v_0^2 \dx.
$$
This implies the entropy stability estimate \eqref{ent.stab}.
\end{proof}

The proof of Corollary \ref{coro.bdf2} is a consequence of the above proof.
Indeed, employing inequality \eqref{ineq2} instead of \eqref{ineq1}, we can
replace \eqref{5.k} by
\begin{align*}
  \frac{1}{2\alpha(\alpha-1)}\int_\T & \big(v_{k+1}^2 + (2v_{k+1}-v_k)^2\big)\dx
  + 2\kappa_\alpha\tau\int_\T(\Delta v_{k+1})^2 \dx \\
  &\le \frac{1}{2\alpha(\alpha-1)}\int_\T \big(v_{k}^2 + (2v_{k}-v_{k-1})^2\big)\dx,
\end{align*}
which equals \eqref{ent.diss}.


Next, we prove that, if $\alpha=1$, 
the solutions $v_k$ are smooth as long as they are strictly positive.

\begin{lemma}\label{lem.smooth}
Let $\alpha=1$ and let $(v_k)$ be the sequence of weak solutions constructed
in Theorem~\ref{thm.bdf2.ex} satisfying $v_k\ge \mu_k>0$ in $\T$ for $k\ge 1$
and some $\mu_k>0$. Then $v_k\in C^\infty(\T)$.
\end{lemma}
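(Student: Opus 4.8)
The plan is to read the weak equation \eqref{weak_alpha} (for $\alpha=1$) as a fourth-order elliptic equation for $n_k=v_k^2$ with a lower-order quasilinear forcing, and then to run a standard elliptic bootstrap, arguing by induction on $k$. Since $v_k\in H^2(\T)\hookrightarrow L^\infty(\T)$ (recall $d\le3$) and $v_k\ge\mu_k>0$, one has $\mu_k\le v_k\le M_k$ on $\T$ for some $M_k>0$. As composition with $s\mapsto\sqrt s$ maps $W^{m,p}(\T)$ into itself whenever $mp>d$ (a Moser-type estimate, valid because $\sqrt{\cdot}$ is $C^\infty$ and bounded with all its derivatives on $[\mu_k^2,M_k^2]$), and conversely $n_k=v_k^2$, it suffices to prove $n_k\in C^\infty(\T)$. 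Using $v\,\pa_{ij}^2v=\tfrac12\pa_{ij}^2(v^2)-\pa_iv\,\pa_jv$ one obtains the identity $n_k\,\pa_{ij}^2\log n_k=\pa_{ij}^2n_k-4\,\pa_iv_k\,\pa_jv_k$; inserting it into \eqref{disc.dlss} (after shifting the index, for $k\ge2$) or into \eqref{alpha_euler} (for $k=1$) shows that $n_k$ solves
\[
  \Delta^2n_k+\frac{c_0}{\tau}\,n_k
  =4\,\pa_{ij}^2\big(\pa_iv_k\,\pa_jv_k\big)+\frac{1}{\tau}\,h_k ,
\]
where $c_0>0$ and $h_k$ is a bounded-coefficient linear combination of $v_kv_{k-1}$ and $v_kv_{k-2}$ for $k\ge2$, respectively a multiple of $v_kv_0$ for $k=1$. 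On $\T$ the operator $\Delta^2+c_0/\tau$ is an invertible Fourier multiplier whose symbol is bounded away from zero, so $(\Delta^2+c_0/\tau)^{-1}\colon W^{s,p}(\T)\to W^{s+4,p}(\T)$ is bounded for every $s\in\R$ and $1<p<\infty$.

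For the inductive step ($k\ge2$) assume $v_1,\dots,v_{k-1}\in C^\infty(\T)$; then $h_k\in C^\infty(\T)$ as soon as $v_k$ is known to be bounded, so the only critical term above is the quasilinear one $4\,\pa_{ij}^2(\pa_iv_k\,\pa_jv_k)$. From $v_k\in H^2(\T)$ we get $\pa_iv_k\in H^1(\T)\hookrightarrow L^6(\T)$ (and into every $L^p(\T)$, $p<\infty$, if $d\le2$), hence $\pa_iv_k\,\pa_jv_k\in L^3(\T)$, so the right-hand side lies in $W^{-2,3}(\T)$ and elliptic regularity gives $n_k\in W^{2,3}(\T)$, whence $v_k=\sqrt{n_k}\in W^{2,3}(\T)$. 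Then $\pa_iv_k\in W^{1,3}(\T)\hookrightarrow L^p(\T)$ for all $p<\infty$, so $\pa_iv_k\,\pa_jv_k\in L^p(\T)$ for all $p<\infty$, which gives $n_k$, hence $v_k$, in $W^{2,p}(\T)$ for all $p<\infty$. Fixing now some $p>d$: whenever $v_k\in W^{m,p}(\T)$ with $m\ge2$, one has $\pa_iv_k\,\pa_jv_k\in W^{m-1,p}(\T)$ (as $W^{m-1,p}(\T)$ is an algebra for $p>d$), so the right-hand side lies in $W^{m-3,p}(\T)$ and elliptic regularity yields $n_k$, hence $v_k$, in $W^{m+1,p}(\T)$. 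Iterating, $v_k\in W^{m,p}(\T)$ for all $m\in\N$, i.e.\ $v_k\in C^\infty(\T)$, closing the induction. The base case $k=1$ is obtained by the same bootstrap applied to the Euler scheme \eqref{alpha_euler}, now carrying the fixed datum $v_0=n_0^{1/2}$ through the argument.

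The step I expect to be the real obstacle is the treatment of the quasilinear forcing $\pa_{ij}^2(\pa_iv_k\,\pa_jv_k)$: it is of the same differential order as the quantity whose regularity one is trying to raise, so the bootstrap must be organised, using the Sobolev embeddings available in dimension $d\le3$ and the algebra property of $W^{m,p}(\T)$ for $mp>d$, so that every pass genuinely gains a full derivative rather than merely reproducing the regularity already at hand. A secondary point is the very first passes, where one uses that both $H^2(\T)$ and $W^{1,3}(\T)$ embed into $L^p(\T)$ for all finite $p$ when $d\le3$, together with the care needed in the base case to propagate correctly the limited regularity of the datum $v_0=n_0^{1/2}$.
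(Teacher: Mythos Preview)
Your strategy is the paper's: rewrite the scheme as a fourth-order elliptic equation for $n_k$ with a quasilinear forcing and bootstrap via elliptic regularity. The execution differs in two respects. The paper uses the identity $\pa_{ij}^2(n\pa_{ij}^2\log n)=\Delta^2 n-\pa_i\big(2n^{-1}\pa_{ij}^2n\,\pa_jn-n^{-2}(\pa_jn)^2\pa_in\big)$, so the forcing sits under a single divergence, and then climbs the $H^s$-ladder with fractional gains ($H^2\to H^{5/2}\to H^3\to\cdots$). Your identity $n\pa_{ij}^2\log n=\pa_{ij}^2n-4\pa_iv\pa_jv$ places the forcing under two derivatives of a product of \emph{first} derivatives of $v$; you first raise integrability ($W^{2,2}\to W^{2,3}\to W^{2,p}$ for all $p<\infty$) and then, once $p>d$, exploit the algebra property of $W^{m-1,p}$ to gain a full derivative per pass. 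You also absorb $\tfrac{c_0}{\tau}n_k$ into the elliptic operator and make the induction on $k$ explicit, both of which are cleaner than the paper's presentation.

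One inaccuracy: you assert $h_k\in C^\infty(\T)$ once $v_k$ is bounded, but $h_k$ contains $v_k$ itself (not only $v_{k-1},v_{k-2}$), so it is only as regular as $v_k$. This is harmless for the bootstrap, since at the stage $v_k\in W^{m,p}$ one has $h_k\in W^{m,p}\subset W^{m-3,p}$, but the sentence as written is false. The base-case concern you flag is genuine and is not resolved by ``carrying $v_0$ through'': Theorem~\ref{thm.bdf2.ex} only gives $n_0\in L^3(\T)$, hence $v_0\in L^6(\T)$, so the term $v_1v_0$ never improves beyond $L^6$ and the bootstrap for $v_1$ (and likewise for $v_2$, which still involves $v_0$) stalls at $W^{4,6}$ rather than reaching $C^\infty$. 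The paper's proof has exactly the same gap --- it asserts that the time term lies in $H^2(\T)$, which tacitly requires $v_0\in H^2(\T)$. In the application (Theorem~\ref{thm.bdf2.conv}) one effectively has smooth data and only $H^4$ is needed, so this does not affect the main result, but the $C^\infty$ conclusion as stated requires more regularity on $n_0$ than is assumed.
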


\begin{proof}
We recall that the weak form \eqref{weak_alpha} for $\alpha=1$ reads as
$$
  \int_\T v_{k+1}\left(\frac32 v_{k+1}-2v_k+\frac12 v_{k-1}\right)\phi \dx
  + \frac{\tau}{2}\int_\T\big(v_{k+1}\pa_{ij}^2v_{k+1}-\pa_i v_{k+1}\pa_j v_{k+1}\big)
  \pa_{ij}^2\phi \dx = 0
$$
for $\phi\in H^2(\T)$.
Since $v_k$ is assumed to be strictly positive, we can write
$$
  v_{k+1}\pa_{ij}^2v_{k+1}-\pa_i v_{k+1}\pa_j v_{k+1} 
  = \frac12 n_{k+1}\pa_{ij}^2\log n_{k+1},
$$
where $n_{k+1}=v_{k+1}^2$ and consequently, 
\begin{equation}\label{vk}
  v_{k+1}\left(\frac32 v_{k+1}-2v_k+\frac12 v_{k-1}\right)
  + \frac{\tau}{4}\pa_{ij}^2(n_{k+1}\pa_{ij}^2\log n_{k+1}) = 0
  \quad\mbox{in }H^{-2}(\T).
\end{equation}
With the identity
$$
  \pa_{ij}^2(n_{k+1}\pa_{ij}^2\log n_{k+1}) 
  = \Delta^2 n_{k+1} - \pa_i\left(2\frac{\pa_{ij}^2n_{k+1}\pa_j n_{k+1}}{n_{k+1}}
  - \frac{(\pa_j n_{k+1})^2\pa_i n_{k+1}}{n_{k+1}^2}\right),
$$
it follows that $n_{k+1}$ solves
\begin{equation}\label{delta2}
  \Delta^2 n_{k+1} = \pa_i\left(2\frac{\pa_{ij}^2n_{k+1}\pa_j n_{k+1}}{n_{k+1}}
  - \frac{(\pa_j n_{k+1})^2\pa_i n_{k+1}}{n_{k+1}^2}\right)
  - \frac{4}{\tau}v_{k+1}\left(\frac32 v_{k+1}-2v_k+\frac12 v_{k-1}\right)
\end{equation}
in the sense of $H^{-2}(\T)$. The second term on the right-hand side is an
element of $H^2(\T)$. The continuity of the Sobolev embedding
$H^2(\T)\hookrightarrow W^{1,6}(\T)$ (for $d\le 3$) implies that
$(\pa_j n_{k+1})^2\pa_i n_{k+1}/n_{k+1}\in L^2(\T)$ and
$\pa_{ij}^2n_{k+1}\pa_j n_{k+1}/n_{k+1}\in L^{3/2}(\T)\hookrightarrow H^{-1/2}(\T)$
for all $i,j=1,\ldots,d$. This proves that
$$
  \Delta^2 n_{k+1}\in H^{-3/2}(\T).
$$
The regularity theory for elliptic operator on $\T$ (e.g., using Fourier
transforms on the torus) yields $n_{k+1}\in H^{5/2}(\T)$ which improves
the previous regularity $n_{k+1}\in H^2(\T)$. Taking into account the improved
regularity and the embedding $H^{5/2}(\T)\hookrightarrow W^{2,3}(\T)$,
we infer that the right-hand side of \eqref{delta2} lies in $H^{-1}(\T)$, i.e.
$$
  \Delta^2 n_{k+1}\in H^{-1}(\T),
$$
which implies that $n_{k+1}\in H^3(\T)$. By bootstrapping, we conclude that
$n_{k+1}\in H^m(\T)$ for all $m\in\N$.
\end{proof}

Now, we are in the position to prove Theorem \ref{thm.bdf2.conv}.

\begin{proof}[Proof of Theorem \ref{thm.bdf2.conv}]
Let $(v_k)$ be a sequence of weak solutions to \eqref{weak_alpha}.
Since we have assumed that $v_k$ is strictly positive, Lemma \ref{lem.smooth}
shows that $v_k$ is smooth. As a consequence, $v_k$ solves (see \eqref{vk})
$$
  \frac32 v_{k+1} - 2v_k + \frac12 v_{k-1} + \frac{1}{v_{k+1}}
  \pa_{ij}^2\big(v_{k+1}^2\pa_{ij}^2\log v_{k+1}\big) = 0 \quad\mbox{in }\T.
$$
Let $n=v^2$ be a solution to \eqref{dlss} with the regularity indicated in the
theorem. By Taylor expansion,
$$
  v_t(t_{k+1}) = \frac{1}{\tau}\left(\frac32 v(t_{k+1}) - 2v(t_k) 
  + \frac12 v(t_{k-1})\right) + \frac{f_k}{\tau}, \quad k\ge 1,
$$
where
$$
  f_k = -\int_{t_k}^{t_{k+1}} v_{ttt}(s)(t_k-s)^2 \dd s
  + \frac14\int_{t_{k-1}}^{t_{k+1}}v_{ttt}(s)(t_{k-1}-s)^2\dd s
$$
can be interpreted as the local truncation error. We estimate $f_k$ as follows:
\begin{equation}\label{f.k}
  \sum_{k=1}^{m-1}\|f_k\|_{L^2(\T)}^2 \le C_R\|v_{ttt}\|_{L^2(0,T;L^2(\T))}^2\tau^5,
\end{equation}
where $C_R>0$ does not depend on $\tau$ or $m$. Similarly, we have
$$
  v_t(t_1) = \frac{1}{\tau}(v(t_1)-v(t_0)) + \frac{f_0}{\tau}, \quad\mbox{where }
  f_0 = \int_0^\tau v_{tt}(s)s\dd s,
$$
and
\begin{equation}\label{f.0}
  \|f_0\|_{L^2(\T)} 
  \le \int_0^\tau \|v_{tt}(s)\|_{L^2(\T)} s \dd s
  \le \frac{\tau^2}{2}\|v_{tt}\|_{L^\infty(0,T;L^2(\T))}.
\end{equation}
Replacing the time derivative $v_t$ in \eqref{dlss}, written as
$v_t + v^{-1}\pa_{ij}^2(v^2\pa_{ij}^2\log v) = 0$,
by the above expansions, it follows that
\begin{align}
  v(t_1)-v(t_0) + \frac{\tau}{v(t_1)}\pa_{ij}^2\big(v(t_1)^2\pa_{ij}^2\log v(t_1)\big)
  &= -f_0, \label{v.0} \\
  \frac32 v(t_{k+1}) - 2v(t_k) + \frac12 v(t_{k-1}) + \frac{\tau}{v(t_{k+1})}
  \pa_{ij}^2\big(v(t_{k+1})^2\pa_{ij}^2\log v(t_{k+1})\big)
  &= -f_k, \label{v.k}
\end{align}
for $k\ge 1$. Taking the difference of \eqref{alpha_euler}, multiplied by $v_1^{-1}$, 
and \eqref{v.0}, and the difference of \eqref{disc.dlss}, multiplied by $v_{k+1}^{-1}$, 
and \eqref{v.k}, we obtain the error equations for $e_k:=v_k-v(t_k)$:
\begin{align*}
  e_1-e_0 + \tau\big(A(v_1)-A(v(t_1))\big) &= f_0, \\
  \frac32 e_{k+1} - 2e_k + \frac12 e_{k-1} 
  + \tau\big(A(v_{k+1})-A(v(t_{k+1}))\big) &= f_k, \quad k\ge 1,
\end{align*}
where we have introduced the operator 
$$
  A:D(A)\to H^{-2}(\T), \quad A(v) = \frac{1}{v}\pa_{ij}^2(v^2\pa_{ij}^2\log v),
$$
with domain $D(A)=\{v\in H^2(\T):v>0$ in $\T\}$. 

We multiply the error equations by $e_1$ and $e_{k+1}$, respectively, integrate
over $\T$, and sum over $k=0,\ldots,m-1$:
\begin{align}
  \int_\T & (e_1-e_0)e_1 \dx + \sum_{k=1}^{m-1}\int_\T
  \left(\frac32 e_{k+1}-2e_k+\frac12 e_{k-1}\right)e_{k+1}\dx \nonumber \\
  &{}+ \tau\sum_{k=0}^{m-1}\int_\T\big(A(v_{k+1})-A(v(t_{k+1}))\big)
  (v_{k+1}-v(t_{k+1}))\dx = \sum_{k=0}^{m-1}f_k e_{k+1}\dx. \label{err}
\end{align}
Using $e_0=0$ and inequality \eqref{ineq1}, the first two integrands can be 
estimated by
\begin{align*}
  (e_1-e_0)e_1 +& \sum_{k=1}^{m-1}
  \left(\frac32 e_{k+1}-2e_k+\frac12 e_{k-1}\right)e_{k+1} \\
  &\ge e_1^2 + \sum_{k=1}^{m-1}\left(\frac34 e_{k+1}^2 - e_k^2 + \frac14 e_{k-1}^2
  + \frac12(e_{k+1}-e_k)^2 - \frac12(e_k-e_{k-1})^2\right)\dx \\
  &= e_1^2 + \frac34 e_m^2 - \frac34 e_1^2 - \frac14e_{m-1}^2 + \frac14 e_0^2
  + \frac12(e_m-e_{m-1})^2 - \frac12(e_1-e_0)^2 \\
  &= \frac34 e_m^2 - \frac14 e_{m-1}^2 - \frac14 e_1^2 + \frac12(e_m-e_{m-1})^2 \\
  &\ge \frac34 e_m^2 - \frac14 e_{m-1}^2 - \frac14 e_1^2.
\end{align*}
For the third integral in \eqref{err}, we employ the monotonicity of the
operator $A$. In fact, it is proved in \cite[Lemma 3.5]{JuPi03} that for
positive functions $w_1$, $w_2\in H^4(\T)$,
$$
  \int_\T(A(w_1)-A(w_2))(w_1-w_2)\dx = \int_\T\frac{1}{w_1w_2}
  \left|\diver\left(w_1^2\na\left(\frac{w_1-w_2}{w_1}\right)\right)\right|^2\dx \ge 0.
$$
The right-hand side of \eqref{err} is estimated by Young's inequality:
\begin{align*}
  \int_{\T}f_0 e_{1}\dx 
  &\le 2\|f_0\|_{L^2(\T)}^2 + \frac18\|e_1\|_{L^2(\T)}^2, \\ 
  \int_{\T}f_k e_{k+1}\dx 
  &\le \frac{1}{2\tau}\|f_k\|_{L^2(\T)}^2 
  + \frac{\tau}{2}\|e_{k+1}\|_{L^2(\T)}^2,\quad k\geq 1.
\end{align*}
Summarizing the above estimates and taking into account \eqref{f.k} and \eqref{f.0}, 
we find that
\begin{align*}
  \frac34\|e_m\|_{L^2(\T)}^2
  &\le \frac14\|e_{m-1}\|_{L^2(\T)}^2 + \frac14\|e_1\|_{L^2(\T)}^2
  + 2\|f_0\|_{L^2(\T)}^2 + \frac18\|e_1\|_{L^2(\T)}^2 \\
  &\phantom{xx}{}+ \frac{1}{2\tau}\sum_{k=1}^{m-1}\|f_k\|_{L^2(\T)}^2
  + \frac{\tau}{2}\sum_{k=1}^{m-1}\|e_{k+1}\|_{L^2(\T)}^2 \\
  &\le \frac14\|e_{m-1}\|_{L^2(\T)}^2 + \frac38\|e_1\|_{L^2(\T)}^2
  + C\tau^4 + \frac{\tau}{2}\sum_{k=2}^{m}\|e_{k}\|_{L^2(\T)}^2,
\end{align*}
where $C>0$ depends on the $L^2(0,T;L^2(\T))$ norm of $v_{ttt}$ and the
$L^\infty(0,T;L^2(\T))$ norm of $v_{tt}$ but not on $\tau$.
Taking the maximum over $m=1,\ldots,M$, we infer that
$$
  \frac34\max_{m=1,\ldots,M}\|e_m\|_{L^2(\T)}^2
  \le \frac58\max_{m=1,\ldots,M}\|e_{m-1}\|_{L^2(\T)}^2
  + C\tau^4 + \frac{\tau}{2}\sum_{k=2}^{M}\|e_{k}\|_{L^2(\T)}^2.
$$
The first term on the right-hand side is controlled by the left-hand side, 
leading to
$$
  \|e_M\|_{L^2(\T)}^2 \le \max_{m=1,\ldots,M}\|e_m\|_{L^2(\T)}^2
  \le 8C\tau^4 + 4\tau\sum_{k=2}^{M}\|e_{k}\|_{L^2(\T)}^2.
$$
We separate the last summand in the sum,
$$
  (1-4\tau)\|e_M\|_{L^2(\T)}^2 
  \le 8C\tau^4 + 4\tau\sum_{k=2}^{M-1}\|e_{k}\|_{L^2(\T)}^2,
$$
and apply the inequality $1+x\le e^x$ for all $x\ge 0$ and
the discrete Gronwall lemma (see, e.g., \cite[Theorem 4]{WiWo65}):
\begin{align*}
  \|e_M\|_{L^2(\T)}^2 &\le \frac{8C\tau^4}{1-4\tau}
  \left(1+\frac{4\tau}{1-4\tau}\right)^{M-2}
  \leq \frac{8C\tau^4}{1-4\tau}\exp\Big(\frac{4t_{M-2}}{1-4\tau}\Big) 
  \le 16C\tau^4 \exp(8t_{M-2}).
\end{align*}
The result follows for all $0<\tau<1/8$ with the constant $4\sqrt{C}\exp(4T)$,
where $T>0$ is the terminal time.
\end{proof}


\section{Fully discrete variational derivative method}\label{sec.dvdm}

In this section, we explore the variational structure of the DLSS equation
on a discrete level, using the discrete variational derivative method of
\cite{FuMa10}. In order to explain the idea, we consider first the implicit Euler
discretization. 

Let $x_i=ih$, $i=0,\ldots,N-1$, be an equidistant grid on the
one-dimensional torus $\K\cong[0,1)$, let $t_k=k\tau$ with $\tau>0$, 
and let $U_i^k$ approximate $n(t_k,x_i)$.
Set $U^k=(U_0^k,\ldots,U_{N-1}^k)\in\R^N$ and $U_\ell = U_{\ell\mod N}$ for all 
$\ell\in\Z$. 
We introduce the following difference operators for $U=(U_i)\in\R^N$:

\begin{center}
\begin{tabular}{ll}
  forward difference: & $\delta_i^+ U = h^{-1}(U_{i+1}-U_i)$, \\[1mm]
  backward difference: & $\delta_i^- U = h^{-1}(U_i-U_{i-1})$, \\[1mm]
  central difference: & $\delta^{\langle1\rangle}_i U =
  (2h)^{-1}(U_{i+1}-U_{i-1})$, \\[1mm] second-order central difference: & 
  $\delta^{\langle2\rangle}_i U = \delta_i^+\delta_i^- U = \delta_i^-\delta_i^+ U$.
\end{tabular}
\end{center}

The first step is to define the discrete Fisher information.
We choose a symmetric form for the derivative, $v_x^2(x_i)\approx
\frac12((\delta_i^+ V)^2+(\delta^-_i V)^2)$, where $V=(V_i)=(\sqrt{U_i})\in\R^N$.
The Fisher information $F[v^2]=\int_\K v_x^2 \dx$ is approximated by using
the first-order quadrature rule $\int_\K w(x)\dx\approx \sum_{i=0}^{N-1}w(x_i)h$.
Actually, this rule is of second order $O(h^2)$ here, 
since due to the periodic boundary conditions, 
it coincides with the trapezoidal rule, $(w(x_0)+w(x_N))h/2+\sum_{I=1}^{N-1}w(x_i)h$.
Therefore, the discrete Fisher information reads as
$$
  F_d[U] = \frac{1}{2}\sum_{i=0}^{N-1}\big((\delta_i^+ V)^2 + (\delta_i^- V)^2\big)h,
  \quad U=(U_i)\in\R^N.
$$

The second step is the definition of the discrete variational derivative.
Applying the discrete variation procedure and using summation by parts
(see \cite[Prop.\ 3.2]{FuMa10}), we calculate
\begin{align}
  F_d[U^{k+1}] - F_d[U^k] 
  &= \frac12\sum_{i=0}^{N-1}\left((\delta_i^+V^{k+1})^2 - (\delta_i^+V^k)^2 
  + (\delta_i^-V^{k+1})^2 - (\delta_i^-V^k)^2\right)h \nonumber \\
  &= \frac12\sum_{i=0}^{N-1}\big[\delta_i^+(V^{k+1} + V^k)\delta_i^+(V^{k+1} - V^k) 
  \nonumber \\
  &\phantom{xx}{}+ \delta_i^-(V^{k+1} + V^k)\delta_i^-(V^{k+1} - V^k)\big]h 
  \nonumber \\
  &= -\sum_{i=0}^{N-1}\delta_i^{\langle 2\rangle}(V^{k+1} + V^k)(V_i^{k+1} -
  V_i^k)h \nonumber \\
  &= -\sum_{i=0}^{N-1}\frac{\delta_i^{\langle 2\rangle}(V^{k+1} + V^k)}
  {V_i^{k+1} + V_i^k}(U_i^{k+1} - U_i^k)h, \quad k\geq 0. \label{dcr}
\end{align}
This motivates the definition of the discrete variational derivative
\begin{equation}\label{q1.dvd}
  \frac{\delta F_d}{\delta(U^{k+1},U^k)_i} 
  = -\frac{\delta_i^{\langle 2\rangle}(V^{k+1} + V^k)}{V_i^{k+1} + V_i^k}, \quad   
  i=0,\ldots,N-1,
\end{equation}
since this implies the discrete chain rule
$$
  F_d[U^{k+1}]-F_d[U^k] = \sum_{i=0}^{N-1}\frac{\delta F_d}{\delta(U^{k+1},U^k)_i}
  (U_i^{k+1}-U_i^k)h.
$$
Observe that \eqref{q1.dvd} is a Crank-Nicolson type approximation of
the variational derivative 
$\delta F[n]/\delta n=-(\sqrt{n})_{xx}/\sqrt{n}=-v_{xx}/v$, where $n=v^2$.
The implicit Euler discrete variational derivative (DVD) method for the DLSS
equation is then given by the nonlinear system with unknowns 
$U^{k+1} = (V^{k+1})^2$:
\begin{equation}\label{bdf1.dvdm}
  \frac{1}{\tau}(U_i^{k+1} - U_i^k) 
  = \delta_i^{\langle 1\rangle}\left(U^{k+1}\delta_i^{\langle 1\rangle}
  \left(\frac{\delta F_d}{\delta(U^{k+1},U^k)}\right)\right), \quad
  i=0,\ldots,N-1,\ k\geq 0.
\end{equation}
The initial condition $n_0$ is approximated by its projection on the discrete grid,
defining the starting vector $U^0\in\R^N$. 
Multiplying the above scheme by $\delta F_d/\delta(U^{k+1},U^k)_i$, summing
over $i=0,\ldots,N-1$, and employing the discrete chain rule \eqref{dcr},
we infer the discrete dissipation property
\begin{equation}\label{dvd.diss}
  \frac{1}{\tau}(F_d[U^{k+1}] - F_d[U^k])
  + \sum_{i=0}^{N-1}U_i^{k+1}\left(\delta_i^{\langle 1\rangle}
  \left(\frac{\delta F_d}{\delta(U^{k+1},U^k)}\right)\right)^2h = 0.
\end{equation}
In fact, this proves the monotonicity of the discrete Fisher information for $q=1$. 

\begin{remark}\rm
Observe that we could have taken a different approximation for the
discrete Fisher information, e.g.\ $\widetilde{F}_d[U] =
\sum_{i=0}^{N-1}(\delta_i^{\langle 1\rangle} V)^2h$. This would lead to a different variational
derivative $\delta \widetilde{F}_d/\delta(U^{k+1},U^k)$ and eventually to a
another scheme (\ref{bdf1.dvdm}), with $F_d$ replaced by $\widetilde{F}_d$,
which dissipates $\widetilde{F}_d$ instead. Besides the symmetry, which brings the second-order consistency in space, the above choice of the
discrete Fisher information is motivated by the fact that $\delta_i^+\delta_i^- = \delta_i^{\langle 2\rangle}$, used in the discrete variation procedure.
\hfill\qed
\end{remark}

In the following, we consider temporally higher-order discretizations.
There are several ways to generalize the above DVD method. In order to stay in the
spirit of Section \ref{sec.bdf2}, we derive higher-order DVD methods, which are based
on backward differentiation formulas. The function $f(\xi,\eta)=(\xi^2+\eta^2)/2$
represents both the Fisher information $F[n]=\int_\K f(v_x,v_x)\dx$ and
the discrete Fisher information $F_d[U]=\sum_{i=0}^{N-1}f(\delta_i^+V,\delta_i^-V)h$.
The definition of $f$ is motivated by the following formal representation
of the variational derivative,
$$
  \frac{\delta F[n]}{\delta n} 
  = -\frac{v_{xx}}{v} 
  = -\frac{1}{2v}\left(\pa_x\pa_\xi f\big|_{\xi=v_x}
  + \pa_x\pa_\eta f\big|_{\eta=v_x}\right).
$$
This formula gives an idea how to approximate the variational derivative in general.
We denote by $\delta^{1,q}_k$ the $q$-th step BDF operator at time $t_k$.
For instance, the formulas for $q=1$ and $q=2$ are given in \eqref{1.bdf1} and
\eqref{1.bdf2}, respectively.
The discrete variational derivative of order $q$ is defined componentwise by
\begin{equation}\label{gen.dvd}
  \frac{\delta F_d}{\delta(U^{k+1},\ldots,U^{k-q+1})_i} 
  = -\frac{1}{2V_i^{k+1}}\left(\delta_i^-(\pa_\xi^{\dd} f) 
  + \delta_i^+(\pa_\eta^{\dd} f)\right), \quad k\geq q-1,
\end{equation}
where the discrete operators $\pa^\dd_\xi f$ and $\pa_\eta^\dd f$ are given by
\begin{align*}
  (\pa_\xi^{\dd} f)_i 
  &= \pa_\xi f\big|_{\xi = \delta_i^+V^{k+1}} 
  + r_{\rm corr}\delta_{k+1}^{1,q}(\delta_i^+U^{k+1}) 
  = \delta_i^+V^{k+1} + r_{\rm corr}\delta_{k+1}^{1,q}(\delta_i^+U^{k+1}), \\
  (\pa_\eta^{\dd} f)_i 
  &= \pa_\eta f\big|_{\eta = \delta_i^-V^{k+1}} 
  + r_{\rm corr}\delta_{k+1}^{1,q}(\delta_i^-U^{k+1}) 
  = \delta_i^-V^{k+1} + r_{\rm corr}\delta_{k+1}^{1,q}(\delta_i^-U^{k+1}),
\end{align*}
and $r_{\rm corr}$ is a correction term, which has to be determined in such a way
that the discrete chain rule
$$
  \delta_{k+1}^{1,q} F_d[U^{k+1}] 
  = \sum_{i=0}^{N-1}\frac{\delta F_d}{\delta(U^{k+1},\ldots,U^{k-q+1})_i}
  \delta_{k+1}^{1,q}U_i^{k+1}h
$$
holds.
The role of the correction term is not only to satisfy the discrete chain rule
but also to increase the temporal accuracy of the discrete variational
derivative. Straightforward computations with the above expressions using
summation by parts formulas and periodic boundary conditions yield
\begin{align}
  & \frac{\delta F_d}{\delta(U^{k+1},\ldots,U^{k-q+1})_i} 
  = -\frac{\delta_i^{\langle2\rangle}V^{k+1}}{V_i^{k+1}} 
  - r_{\rm corr}
  \frac{\delta_{k+1}^{1,q}\delta_i^{\langle2\rangle}U^{k+1}}{V_i^{k+1}}, \quad
  k\geq q-1, \label{dvd.bdfq} \\
  & r_{\rm corr} 
  = \frac{\delta_{k+1}^{1,q}F_d[U^{k+1}] - \sum_{i=0}^{N-1}\delta_i^+V^{k+1}
  \delta_i^+\Big(\frac{\delta_{k+1}^{1,q}U^{k+1}}{V^{k+1}}\Big)h}{\sum_{i=0}^{N-1}
  (\delta_i^+\delta_{k+1}^{1,q}U^{k+1})
  \delta_i^+\Big(\frac{\delta_{k+1}^{1,q}U^{k+1}}{V^{k+1}}\Big)h}. \label{rcorr}
\end{align}
We note that for $q=1$, this definition generally does not coincide with
the discrete variational derivative \eqref{q1.dvd}. 
The temporally BDF$q$ discrete variational derivative (BDF$q$ DVD) method
is then defined by the following nonlinear system in the unknowns
$U^{k+1}=(V^{k+1})^2$:
\begin{equation}\label{bdfq.dvdm}
  \delta_{k+1}^{1,q}U_i^{k+1} 
  = \delta_i^{\langle1\rangle}\left(U^{k+1}\delta_i^{\langle1\rangle}
  \left(\frac{\delta F_d}{\delta(U^{k+1},\ldots,U^{k-q+1})}\right)\right), \quad
  i=0,\ldots,N-1,\ k\geq q-1.
\end{equation}
In particular, for $q=1$, we obtain two methods: the BDF1 DVD scheme \eqref{bdfq.dvdm}
and the DVD scheme \eqref{bdf1.dvdm}.

\begin{proof}[Proof of Theorem \ref{thm.dvdm}.]
Let $n=v^2$ be a smooth positive solution to \eqref{qde} with $d=1$. According
to \cite{BLS94}, such a solution exists at least in a small time interval if
the initial datum is smooth and positive.
Furthermore, let $q\in\N$, $q\ge 2$ (and typically $q\le 6$), be the order
of the backward differentiation formula. 

First, we consider the discrete variational derivative \eqref{q1.dvd}.
A Taylor expansion around $(t_{k+1},x_i)$ yields
\begin{align*}
  \frac{\delta F_d}{\delta(n(t_{k+1}), n(t_k))}\Big|_{x=x_i} 
  &= -\frac{\delta_i^{\langle2\rangle}(v(t_{k+1}, x_i) 
  + v(t_k, x_i))}{v(t_{k+1},x_i) + v(t_k, x_i)} 
  = \frac{v_{xx}}{v}(t_{k+1},x_i) + O(\tau) + O(h^2) \\
  &= \frac{\delta F}{\delta n}[n](t_{k+1},x_i) + O(\tau) + O(h^2),
\end{align*}
where $i=0,\ldots,N-1$, $k\ge 0$. Similarly,
$$
  \delta_i^{\langle1\rangle}\left(n(t_{k+1})\delta_i^{\langle1\rangle}
  \left(\frac{\delta F_d}{\delta(n(t_{k+1}),n(t_k))}\right)\right)\bigg|_{x=x_i} 
  = \left(n\left(\frac{\delta F[n]}{\delta n}\right)_x\right)_x(t_{k+1},x_i) 
  + O(\tau) + O(h^2).
$$
Thus, the local truncation error of the right-hand side in \eqref{q1.dvd} 
is of order $O(\tau)+O(h^2)$.
Since the left-hand side is of order $O(\tau)$ in time and exact at spatial
grid points $x_i$, the local truncation error  of scheme \eqref{q1.dvd} is
of order $O(\tau)+O(h^2)$. The monotonicity of the discrete Fisher information
is shown in \eqref{dvd.diss}. 

The mass conservation is an obvious consequence of the scheme. To prove the uniform
boundedness, we observe that, by the discrete $H^1$-seminorm,
$$
  \sum_{i=0}^{N-1}(\delta_i^+ V^k)^2 h \le F_d[U^0] < \infty\quad\text{for all 
  }k\geq1. $$
Then, according to the discrete Poincar\'e-Wirtinger inequality, for 
$i=0,\ldots,N-1$, $k\ge 1$,
\cite[Lemma 3.3]{FuMa10},
$$
  |V_i^k - M_k|^2 \le \sum_{i=0}^{N-1}(\delta_i^+ V^k)^2 h\le F_d[U^0]
$$
with $M_k = \sum_{i=0}^{N-1}V_i^kh$.  Jensen's inequality for the quadratic function and the mass conservation property of the method give $M_k\leq 1$ for all $k\ge 0$.
Finally, by the triangle inequality, $|V_i^k|\le F_d[U^0]^{1/2}+1$ and thus,
$|U_i^k|\le 2F_d[U^0]+2$. 

Next, we consider scheme \eqref{bdfq.dvdm} with the discrete variational
derivative \eqref{dvd.bdfq}. By construction, the left-hand side of 
\eqref{dvd.bdfq} is of order $q$ in time and exact at the spatial grid points
$x_i$. Thus, it remains to prove that the right-hand side is of order $(q,2)$
with respect to time-space discretization. 

Taylor expansions show, with a slight abuse of notation, that
\begin{align}
  \delta_i^\pm v(t_{k+1},x_i) 
  &= v_x(t_{k+1},x_i) \pm \frac{h}{2}v_{xx}(t_{k+1},x_i) + O(h^2), 
  \label{bdfq.aux.12} \\
  -\frac{\delta_i^{\langle2\rangle} v(t_{k+1},x_i)}{v(t_{k+1},x_i)} 
  &= -\frac{v_{xx}}{v}(t_{k+1},x_i) + O(h^2), \label{bdfq.aux.3} \\
  \frac{\delta_{k+1}^{1,q}\delta_i^{\langle2\rangle}n(t_{k+1},x_i)}{v(t_{k+1},x_i)}
  &= \frac{n_{txx}}{v}(t_{k+1},x_i) + O(\tau^q) + O(h^2), \label{bdfq.aux.4} \\
  \delta_i^\pm\delta_{k+1}^{1,q}n(t_{k+1},x_i) 
  &= n_{tx}(t_{k+1},x_i) \pm \frac{h}{2}n_{txx}(t_{k+1},x_i) + O(\tau^q) + O(h^2),
  \label{bdfq.aux.56} \\ 
  \delta_i^\pm\left(\frac{\delta_{k+1}^{1,q}n(t_{k+1},x_i)}{v(t_{k+1},x_i)}\right) 
  &= 2v_{tx}(t_{k+1},x_i) \pm hv_{txx}(t_{k+1},x_i) + O(\tau^q) + O(h^2).
  \label{bdfq.aux.78}
\end{align}
We prove that $r_{\rm corr}$ is of order $(q,2)$. Let $r_n$ and
$r_d$ denote the numerator and denominator of $r_{\rm corr}$, respectively,
replacing $V^{k+1}_i$ by $v(t_{k+1},x_i)$ and $U^{k+1}_i$ by $n(t_{k+1},x_i)$.
Taking into account the periodic boundary conditions, we find that
\begin{align*}
  \sum_{i=0}^{N-1} & (\delta_i^+\delta_{k+1}^{1,q}n(t_{k+1},x_i))\delta_i^+
  \left(\frac{\delta_{k+1}^{1,q}n(t_{k+1},x_i)}{v(t_{k+1},x_i)}\right)h \\
  &= \sum_{i=0}^{N-1}(\delta_i^-\delta_{k+1}^{1,q}n(t_{k+1},x_i))\delta_i^-
  \left(\frac{\delta_{k+1}^{1,q}n(t_{k+1},x_i)}{v(t_{k+1},x_i)}\right)h.
\end{align*}
Therefore, we can split $r_d$ into two parts:
\begin{align*}
  r_d &= \frac12\sum_{i=0}^{N-1}\Bigg[(\delta_i^+\delta_{k+1}^{1,q}n(t_{k+1},x_i))
  \delta_i^+\left(\frac{\delta_{k+1}^{1,q}n(t_{k+1},x_i)}{v(t_{k+1},x_i)}\right) \\ 
  &\phantom{xx}{}+ (\delta_i^-\delta_{k+1}^{1,q}n(t_{k+1},x_i))
  \delta_i^-\left(\frac{\delta_{k+1}^{1,q}n(t_{k+1},x_i)}{v(t_{k+1},x_i)}\right)
  \Bigg]h.
\end{align*}
In view of \eqref{bdfq.aux.56}-\eqref{bdfq.aux.78}, it follows that
$$
  r_d = 2\sum_{i=0}^{N-1}(n_{tx}v_{tx})(t_{k+1},x_i)h + O(\tau^q) + O(h^2).
$$
The numerator $r_n$ is treated in a similar way. Using \eqref{bdfq.aux.12},
the first term in $r_n$ can be written as 
\begin{align*}
  \delta_{k+1}^{1,q} F_d[n(t_{k+1})] 
  &= \frac12\frac{\dd}{\dd t}\sum_{i=0}^{N-1}\big((\delta_i^+v(t,x_i))^2 
  + (\delta_i^-v(t,x_i))^2\big)h\Big|_{t=t_{k+1}} + O(\tau^q) \\
  &= \sum_{i=0}^{N-1}(v_xv_{xt})(t_{k+1},x_i)h 
  + O(\tau^q) + O(h^2). 
\end{align*}
For the second term in $r_n$, we observe that, because of the periodic 
boundary conditions,
$$
  \sum_{i=0}^{N-1}\delta_i^+v(t_{k+1},x_i)
  \delta_i^+\left(\frac{\delta_{k+1}^{1,q}n(t_{k+1},x_i)}{v(t_{k+1},x_i)}\right)h
  =\sum_{i=0}^{N-1}\delta_i^-v(t_{k+1},x_i)
  \delta_i^-\left(\frac{\delta_{k+1}^{1,q}n(t_{k+1},x_i)}{v(t_{k+1},x_i)}\right)h,
$$
and hence, employing \eqref{bdfq.aux.12} and \eqref{bdfq.aux.78},
\begin{align*}
  \sum_{i=0}^{N-1}\delta_i^+v(t_{k+1},x_i)
  \delta_i^+\left(\frac{\delta_{k+1}^{1,q}n(t_{k+1},x_i)}{v(t_{k+1},x_i)}\right)h
  &= \frac12\sum_{i=0}^{N-1}\Bigg[\delta_i^+v(t_{k+1},x_i)
  \delta_i^+\left(\frac{\delta_{k+1}^{1,q}n(t_{k+1},x_i)}{v(t_{k+1},x_i)}\right) \\
  &\phantom{xx}{}+ \delta_i^-v(t_{k+1},x_i)\delta_i^-
  \left(\frac{\delta_{k+1}^{1,q}n(t_{k+1},x_i)}{v(t_{k+1},x_i)}\right)\Bigg]h \\
  &= 2\sum_{i=0}^{N-1}(v_xv_{xt})(t_{k+1},x_i)h + O(\tau^q) + O(h^2).
\end{align*}
Summarizing these identities yields $r_{\rm corr}=O(\tau^q) + O(h^2)$. 
Finally, \eqref{bdfq.aux.3}-\eqref{bdfq.aux.4} imply that
\begin{align*}
  \frac{\delta F_d}{\delta(n(t_{k+1}),\ldots,n(t_{k+1-q}))}\Big|_{x=x_i} 
  &= -\frac{\delta_i^{\langle2\rangle} v(t_{k+1},x_i)}{v(t_{k+1},x_i)} 
  - r_{\rm corr}
  \frac{\delta_{k+1}^{1,q}\delta_i^{\langle2\rangle}n(t_{k+1},x_i)}{v(t_{k+1},x_i)} \\
  &= \frac{\delta F[n]}{\delta n}(t_{k+1}, x_i) + O(\tau^q) + O(h^2).
\end{align*}
This shows that the discrete variational derivative \eqref{dvd.bdfq} is of
order $q$ in time, finishing the proof.
\end{proof}


\section{Numerical examples}\label{sec.num}

In this section, we present some numerical examples which illustrate
the decay properties of the entropy functionals and Fisher information
as well as the convergence properties of the schemes presented in the previous sections.

\subsection{BDF2 finite-difference scheme}

The DLSS equation \eqref{dlss} is approximated by the BDF2 method in time
and central finite differences in space. The scheme is given by the following
nonlinear system with unknowns $V_i^k=(U_i^k)^{\alpha/2}$: For
$i=0,\ldots,N-1$ and $k=1$,
$$
  (V_i^{1})^{2/\alpha-1}\big(V_i^{1} - V_i^0 \big)
  + \tau\delta_i^{\langle 2\rangle}\left((V_i^{1})^{2/\alpha}
  \delta_i^{\langle 2\rangle}\log V_i^{1}\right) = 0
$$
and for $i=0,\ldots,N-1$, $k\ge 2$,
$$
  (V_i^{k+1})^{2/\alpha-1} \left(\frac32 V_i^{k+1} - 2V_i^k + \frac12 V_i^{k-1}
  \right) + \tau\delta_i^{\langle 2\rangle}\left((V_i^{k+1})^{2/\alpha}
  \delta_i^{\langle 2\rangle}\log V_i^{k+1}\right) = 0. 
$$
The initial datum $(V_i^0)$ is given by $(n_0(x_i)^{\alpha/2})$. For $k=1$,
the scheme corresponds to the implicit Euler discretization, needed to
initialize the BDF2 scheme for $k\ge 2$. The above nonlinear system, with periodic
boundary conditions, is solved using the Newton method.

We choose the initial datum $n_0(x)=0.001+\cos^{16}(\pi x)$, $x\in[0,1]$.
The spatial mesh size is $h=0.005$ ($N=200$) and the time step $\tau=10^{-6}$.
The (continuous) entropies $E_\alpha[n]$ are dissipated for $1\le\alpha<3/2$. Figure
\ref{fig.bdf2.stab} (a) illustrates the stability and, in fact, decay of the
discrete entropies $E_{\alpha,\dd}$, defined below, 
for various values of $\alpha$. Although
Theorem~\ref{thm.bdf2.ex} does not provide a stability estimate for $\alpha=1$,
the numerical results indicate that the discrete entropy
$E_{1,\dd}[U]=\sum_{i=0}^{N-1}(U_i(\log U_i-1)+1)h$ is decreasing. 
Figure \ref{fig.bdf2.stab} (b)
shows that the decay of the discrete relative entropy is exponential, 
and even the discrete Fisher information converges exponentially fast to zero.
Here, the discrete relative entropy is defined by 
$$
  E_{\alpha,\dd}^{rel}[U^k] = E_{\alpha,\dd}[U^k] - E_{\alpha,\dd}[\bar U], \quad
  \mbox{where }
  E_{\alpha,\dd}[U^k] = \sum_{i=0}^{N-1}(U_i^k)^\alpha h, \
  \bar U = \sum_{i=0}^{N-1}U^k_i h.
$$

\begin{figure}[ht]
\subfloat[\label{fig:entropy_decay}]{
\includegraphics[width=75mm]{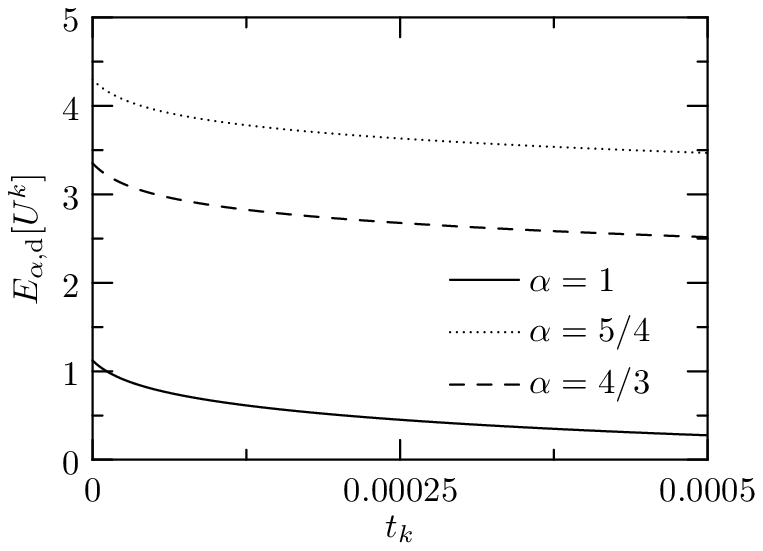}}
\hfill 
\subfloat[\label{fig:rel_entropy_decay}]{
\includegraphics[width=75mm]{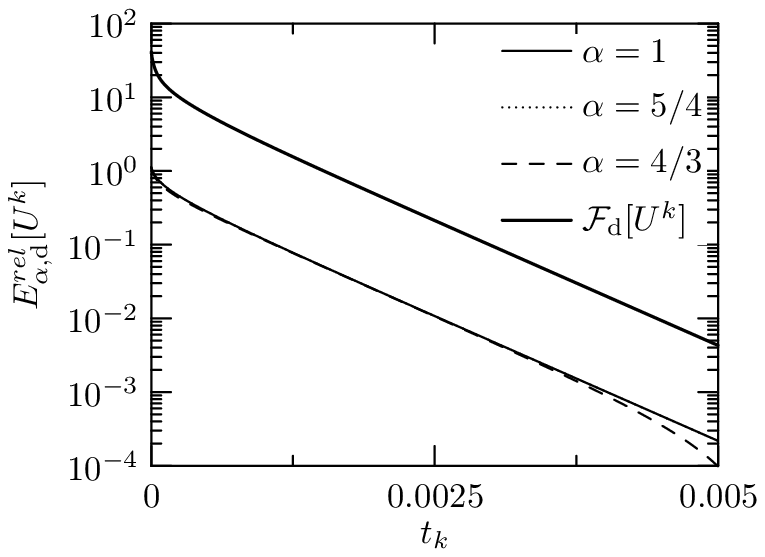}}
\caption{
(a) Entropy stability (decay) for the BDF2 finite-difference scheme.
(b) Exponential decay of the discrete relative entropy and the discrete Fisher
information for the BDF2 finite-difference scheme.}
\label{fig.bdf2.stab}
\end{figure}

According to Theorem \ref{thm.bdf2.conv}, the semi-discrete BDF2 scheme 
converges in second order if $\alpha=1$. This may be not the case for the
fully discrete scheme, since the discretization may destroy the monotonicity
structure of the spatial operator. However, Figure \ref{fig.bdf2.conv} shows
that the numerical convergence rate is close to 2, even for $\alpha\neq 1$. 
The numerical convergence rates $cr$ have been obtained by the linear regression method. 
The convergence of the method is measured in the discrete $\ell^2$-norm
$$
  \|e_m\|_2 := \left(\sum_{i=0}^{N-1}(V_{{\rm ex},i}^m-V_i^m)^2 h\right)^{1/2},
$$
and the numerical solutions are compared at time $t=5\cdot 10^{-5}$.
Here, the ``exact'' solution $V_{{\rm ex},i}^m$ is computed by the above scheme
using the very small time step $\tau=10^{-10}$. 

\begin{figure}[ht]
\centering\includegraphics[width=75mm]{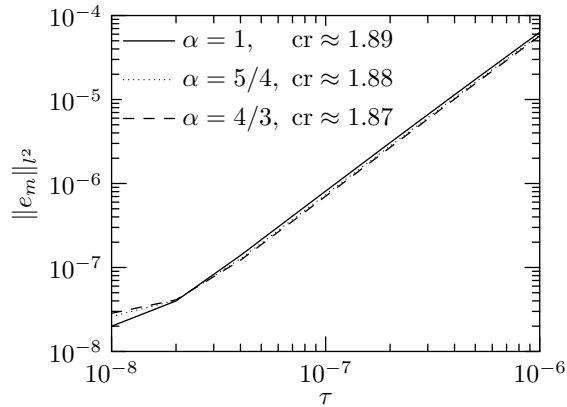}
\caption{Temporal convergence of the BDF2 finite-difference
scheme for various values of $\alpha$; the convergence rate is denoted by $cr$.}
\label{fig.bdf2.conv}
\end{figure}


\subsection{Discrete variational derivative method}

We present some numerical results obtained from the DVD and BDF$q$ DVD schemes derived
in Section \ref{sec.dvdm}. The initial datum and the numerical parameters are chosen
as in previous subsection. In order to solve the discrete nonlinear systems,
we employed here the NAG toolbox routine {\tt c05nb},
which is based on a modification of the Powell hybrid method. 
It turned out that this routine is at least three times faster than the standard
MATLAB routine {\tt fsolve}. 

In Figure \ref{fig.dvdm.fish}, the temporal evolution
of the discrete relative entropies $E_\alpha^{\rm rel}[U^k]$ and the 
discrete Fisher information $F_d[U^k]$ are depicted for (a) the implicit Euler scheme 
\eqref{bdf1.dvdm} and (b) the BDF2 scheme \eqref{bdfq.dvdm}. 
We observe that the decay is in all cases exponential. 
This holds also true for the BDF3 scheme (results not shown).

\begin{figure}[ht]
\subfloat[\label{fig:dvdm_fisher_decay}]{
\includegraphics[width=75mm]{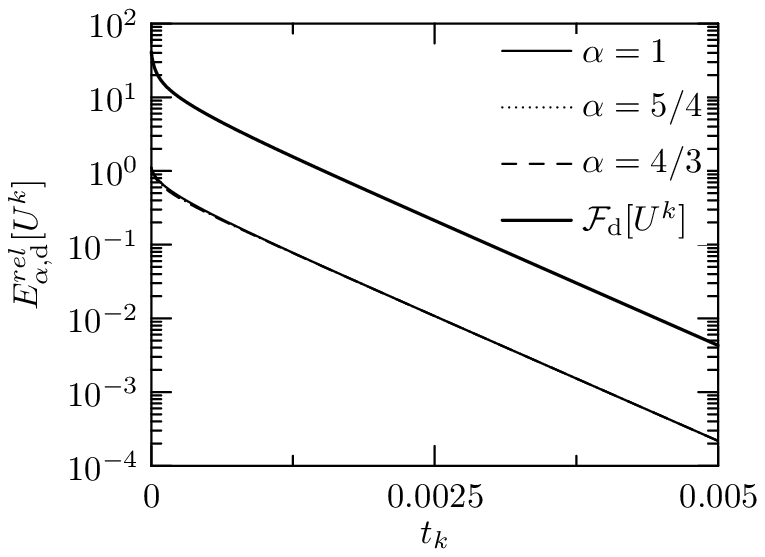}}
\hfill
\subfloat[\label{fig:bdf2_dvdm_fisher_decay}]{
\includegraphics[width=75mm]{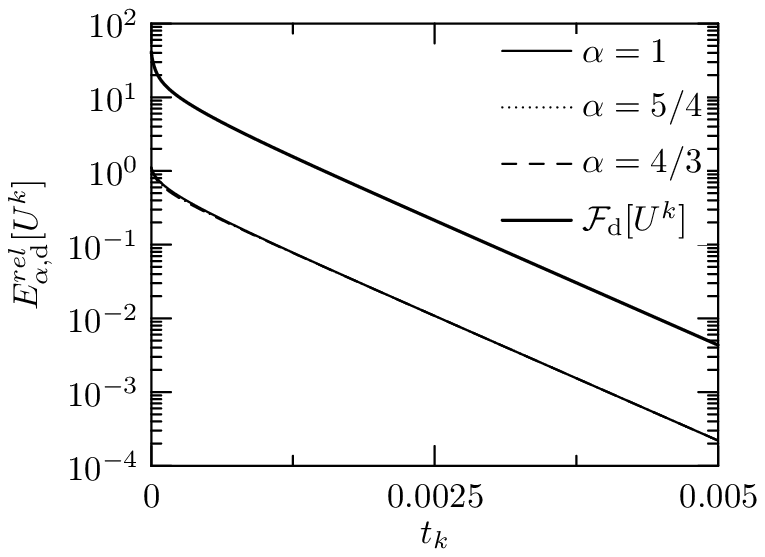}}
\caption{Exponential decay of the discrete Fisher
information and relative entropies using (a) the DVD scheme
and (b) the BDF2 DVD scheme.}
\label{fig.dvdm.fish}
\end{figure}

Next, we test numerically the convergence in time of the DVD scheme.
Figure \ref{fig.dvdm.conv} illustrates the $\ell^2$-errors of the methods.
We have chosen the mesh size $h=0.01$, and we compared the numerical solutions
at time $t_m=5\cdot 10^{-5}$. The ``exact'' solutions are computed by the 
respective method taking the time step $\tau=10^{-9}$. The numerical
convergence rates, computed by the linear regression method, are
given in Table \ref{tab.conv}. We note that the BDF3 DVD scheme gives only
slightly better results than the BDF2 DVD scheme. The reason is that
the first step is initialized by the first-order scheme \eqref{bdf1.dvdm},
and this initialization error cannot be compensated by the higher-order
accuracy of the local approximation. In order to obtain a third-order scheme,
we need to initialize the scheme with a second-order discretization.

\begin{figure}[ht]
\centering\includegraphics[width=75mm]{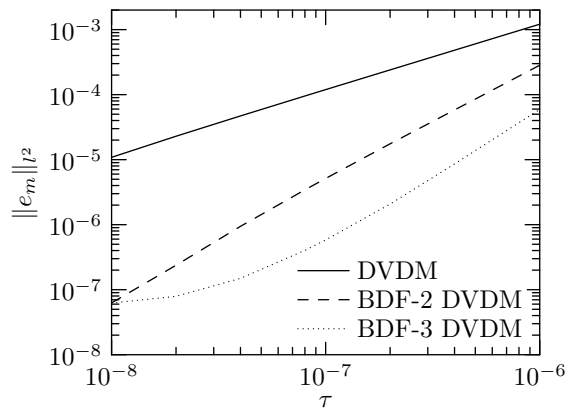}
\caption{Temporal convergence of the DVD, BDF2 DVD, and BDF3 DVD schemes.}
\label{fig.dvdm.conv}
\end{figure}

\begin{table}[ht]
\centering\begin{tabular}{cc}
\hline
Scheme & Convergence rate\\
\hline
DVD       & $1.020$\\
BDF2 DVD & $1.824$\\
BDF3 DVD & $1.977$ \\
\hline
\end{tabular}
\vspace{1mm}
\caption{Numerical temporal convergence rates for the discrete variational
derivative methods.}
\label{tab.conv}
\end{table}


\end{document}